\documentclass[hidelinks,onefignum,onetabnum]{siamart250211}

\usepackage{amssymb,amsfonts,amsmath,mathrsfs,amsopn}
\usepackage{graphicx}
\usepackage{subcaption}
\usepackage{cleveref}
\usepackage{algorithm}
\usepackage{algpseudocode}
\usepackage{xcolor}

\ifpdf
  \DeclareGraphicsExtensions{.eps,.pdf,.png,.jpg}
  \hypersetup{
    pdftitle={Efficient Long-Time Simulations of Multiscale Systems via High-Order Numerical Homogenization},
    pdfauthor={B. Chen, Z. Jin, and R. Li}
  }
\else
  \DeclareGraphicsExtensions{.eps}
\fi

\newsiamremark{assumption}{Assumption}
\newsiamremark{example}{Example}
\newsiamremark{remark}{Remark}
\newsiamremark{experiment}{Experiment}
\newsiamremark{hypothesis}{Hypothesis}
\crefname{hypothesis}{Hypothesis}{Hypotheses}
\crefname{assumption}{Assumption}{Assumptions}
\crefname{equation}{equation}{equations}
\crefname{figure}{figure}{figures}
\crefname{table}{table}{tables}
\crefname{section}{section}{sections}
\crefname{appendix}{appendix}{appendices}
\crefname{theorem}{theorem}{theorems}
\crefname{lemma}{lemma}{lemmas}
\crefname{corollary}{corollary}{corollaries}
\crefname{proposition}{proposition}{propositions}
\crefname{definition}{definition}{definitions}
\crefname{remark}{remark}{remarks}
\crefname{note}{note}{notes}

\newsiamthm{claim}{Claim}

\newcommand{\bR}{\mathbb{R}}

\newcommand{\cC}{\mathcal{C}}
\newcommand{\cE}{\mathcal{E}}
\newcommand{\cG}{\mathcal{G}}
\newcommand{\cO}{\mathcal{O}}
\newcommand{\eps}{\varepsilon}
\newcommand{\rd}{{\mathrm{d}}}

\newcommand{\rx}{\mathbb{R}^{n_x}}
\newcommand{\ry}{\mathbb{R}^{n_y}}
\newcommand{\tx}{\tilde{x}}
\newcommand{\Cost}{\text{Cost}}
\newcommand{\HMM}{{\text{HMM}}}

\newcommand{\Gronwall}{{Gr\"onwall}}

\newcommand{\norm}[1]{\left\|#1\right\|}
\newcommand{\sinn}[1]{\langle{#1}\rangle}

\newcommand{\ip}[2]{\left\langle{#1},{#2}\right\rangle}
\newcommand{\od}[2]{\dfrac{\mathrm{d} #1}{\mathrm{d} #2}}

\graphicspath{{fig/}}

\headers{Efficient Long-Time Simulations of Multiscale Systems}
{Bojin Chen, Zeyu Jin and Ruo Li}

\title{Efficient Long-Time Simulations of Multiscale Systems via High-Order Numerical Homogenization \thanks{Submitted to the editors DATE.
\funding{Ruo Li is partially supported by the National Natural Science Foundation of China (Grant No. 12288101).}}}

\author{Bojin Chen\thanks{School of Mathematical Sciences, 
Peking University, Beijing 100871, China  
  (\email{2501110041@stu.pku.edu.cn}).}
\and Zeyu Jin\thanks{School of Mathematical Sciences, 
Peking University, Beijing 100871, China  
  (\email{jinzy@pku.edu.cn}).}
\and Ruo Li\thanks{CAPT, LMAM and School of Mathematical Sciences, 
Peking University, Beijing 100871, China;  
Chongqing Research Institute of Big Data, Peking University, 
Chongqing 401121, China (\email{rli@math.pku.edu.cn}).}}

\begin{document}
\maketitle

\begin{abstract}
  By a high-order numerical homogenization method, a heterogeneous
  multiscale scheme was developed in \cite{Jin2022HighOrder} for
  evolving differential equations containing two time scales. In this
  paper, we further explore the technique to propose an efficient
  algorithm able to carry out simulations up to a long time which was 
  prohibitive before. The new algorithm is a multigrid-in-time method
  which combines coarse-grid high-order approximations with fine-grid
  low-order evaluations. The high efficiency is attained by minimizing
  the computational cost while the approximation accuracy is
  guaranteed. A priori error estimates are rigorously established and
  validated by numerical examples.
\end{abstract}

\begin{keywords}
fast-slow system; 
heterogeneous multiscale method;
multigrid-in-time;
long-time simulation;
correction models;
numerical homogenization
\end{keywords}

\begin{MSCcodes}
65L11, 65L04, 34E13
\end{MSCcodes}

\section{Introduction}
Multiscale phenomena are ubiquitous in science and engineering, arising in 
fields including celestial mechanics \cite{Laskar1994Large},
climate dynamics \cite{MR2922369}, 
materials science \cite{elliott2011novel,abraham1997molecular}, and 
molecular dynamics \cite{Car1985Unified,Zhang1999A}. 
Such systems are characterized by the intricate interplay between rapidly 
evolving processes, often driven by strong dissipation or high-frequency 
oscillations \cite{Weinan2003Analysis}, and slowly varying modes that 
dictate the long-term behavior of the system.
The core difficulty in their numerical treatment is the severe stiffness 
introduced by the disparate time scales.
Direct numerical simulation that resolves all scales over long time 
horizons is computationally expensive, as stability requirements for 
explicit methods force a time step on the order of the fastest scale, 
leading to a prohibitive number of computations.
Moreover, naive coarse-graining approaches can introduce significant 
biases over long-time scales, potentially obscuring the emergent behaviors 
of interest.

Various numerical strategies have been proposed to mitigate stiffness,
including comprehensive surveys of stiff integrators 
\cite{Hairer1996Solving}, implicit-explicit (IMEX) methods 
\cite{ascher1997implicit,frank1997stability}, and exponential integrators 
\cite{hochbruck2010exponential}.
However, these methods often do not fully exploit the scale separation 
inherent in the problem.
When only a limited set of macroscopic observables is of interest, a more 
effective strategy is to leverage this scale separation by sampling the 
fast dynamics only as needed.
Prominent among methods that leverage scale separation are the 
equation-free approach \cite{MR2041455} and 
the Heterogeneous Multiscale Method (HMM)
\cite{Weinan2012Review,Weinan2012The}.
The HMM framework provides a general and powerful strategy for coupling a 
macroscopic integrator for the slow variables with localized microscopic 
solvers that estimate the effective forces required by the macro-solver
\cite{Weinan2003Analysis}.
This macro-micro coupling allows for the simulation of the slow dynamics 
with large time steps that are independent of the fast scales, leading to 
significant computational savings.

Despite its success, the classical HMM framework faces a critical accuracy 
limitation when applied to long-time simulations.
The standard approach introduces a first-order modeling error 
\cite{Weinan2003Analysis} in the scale-separation parameter, $\eps$, 
of order $\cO(\eps)$. 
Under certain stability assumptions, this error accumulates linearly over 
time. 
Consequently, for simulation horizons of interest, such as 
$t \sim \eps^{-1}$, the accumulated modeling error becomes $\cO(1)$, 
rendering the classical HMM inadequate for producing accurate numerical 
results.

To address this accuracy bottleneck of the classical HMM, a high-order 
numerical homogenization method was developed in \cite{Jin2022HighOrder}.
The approach, based on a fixed-point iteration algorithm, numerically 
computes an asymptotic expansion of the system's slow manifold 
\cite{MR635782,MR715971,MR2759609}, 
achieving a modeling error of $\cO(\eps^{k+1})$ for the $k$-th order model 
over any fixed time interval.
It enables high-order accuracy without requiring manual derivations of 
complicated asymptotic expansions or the storage of high-order tensors.
However, this advance, while addressing the accuracy problem for a 
finite-time simulation, introduced two new critical challenges, which form 
the primary motivation for the present paper.

First, a theoretical gap remained concerning the long-time error behavior. 
The error analysis provided in \cite{Jin2022HighOrder} yields 
bounds that grow exponentially with time, of the form $e^{C_k t}$. 
While sufficient for fixed, finite time intervals, this exponential 
factor fails to guarantee accuracy over the extended horizons relevant 
to many applications, such as $t \sim \eps^{-1}$. 
A rigorous theory for long-time error accumulation was needed.
Second, an efficiency gap emerged from the method's construction. 
The recursive numerical construction of the $k$-th order 
correction model, while elegant, is computationally expensive. 
Each successive order of correction roughly doubles the number of 
microscopic solver evaluations required, leading to a total computational 
complexity that scales as $\cO(2^k)$ relative to the classical HMM. 
This redundancy in cost renders high-order models impractical for the very 
long-time simulations where their accuracy is most needed.

In this work, these gaps are bridged with a dual contribution in theory 
and algorithmic design.
First, a rigorous long-time error analysis for the high-order HMM is 
provided.
It is proved that the modeling error accumulates linearly in time, not 
exponentially, as long as the problem is well-posed for long-time 
simulations. 
This yields a modeling error bound of $\cO(\eps^{k+1} t)$, which provides 
a solid theoretical foundation for the use of high-order HMM in long-time 
regimes. 
Second, a novel multigrid-in-time (MGT) HMM algorithm is introduced.
This framework achieves the accuracy of a high-order model at a 
computational cost comparable to that of a low-order baseline method.
The core idea is to use hierarchical temporal discretization, computing  
expensive high-order corrections only on coarse temporal grids and 
propagating their effect to finer grids via extrapolation, thereby 
decoupling high accuracy from high computational cost.

The remainder of the paper is organized as follows. 
\Cref{sec:pre} establishes the mathematical framework, reviewing rapidly 
dissipative fast-slow systems and the high-order HMM.
\Cref{sec:model} derives the improved long-time modeling error bounds for 
the high-order models.
\Cref{sec:grid} presents the multigrid-in-time algorithm in detail, 
including its complexity analysis.
\Cref{sec:analysis} provides a rigorous a priori numerical error analysis 
of the complete MGT-HMM scheme.
\Cref{sec:num} provides a comprehensive suite of numerical experiments 
validating our theoretical claims on several benchmark problems.
Finally, \Cref{sec:concl} offers some conclusive remarks.

\section{Preliminaries}
\label{sec:pre}
This section establishes the mathematical framework for the class of 
problems under consideration and reviews the high-order numerical 
homogenization method that forms the basis of this work.

\subsection{Rapidly dissipative fast-slow systems}
\label{sec:pre_dissipative}
Let us consider the following fast-slow system
\cite{pavliotis2008multiscale}:
\begin{equation}
  \label{equation}
  \left\{
    \begin{aligned}
      & \od{x}{t} = f(x,y), \ & x(0) = x_0, \\
      & \od{y}{t} = \frac{1}{\eps}  g(x,y), \ & y(0) = y_0,
    \end{aligned}
  \right.
\end{equation}
where $x \in \rx$ and $y \in \ry$ are the slow and fast variables, 
respectively, and $0< \eps \le \eps_0 \ll 1$ is the scale-separation 
parameter.

In the subsequent discussions, we always impose the following regularity 
conditions for the right-hand term of the system \eqref{equation}.

\begin{assumption}[Regularity]
  \label{assump:bound}
  The functions $f$ and $g$ are sufficiently smooth, with bounded 
  derivatives up to some order:
  \begin{displaymath}
    f \in W^{K,\infty}(\rx \times \ry, \rx), \quad \nabla g \in
    W^{K,\infty}(\rx \times \ry, \bR^{n_y \times (n_x + n_y)})
  \end{displaymath}
  for some integer $K > 0$.
\end{assumption}

Furthermore, the system \eqref{equation} is assumed to be 
\emph{rapidly dissipative}.
\begin{assumption}[Rapid dissipation]
  \label{assump:dissipative}
  There exists $\beta > 0$ such that for all $x \in \rx$ and 
  $y, \tilde{y} \in \ry$, one has that 
  \[
    \ip{g(x,y) - g(x,\tilde{y})}{y - \tilde{y}} 
    \le -\beta |y-\tilde{y}|^2.
  \]
\end{assumption}

Under these assumptions, it is shown in \cite{Jin2022HighOrder} that 
$g(x, \cdot) : \ry \to \ry$ is bijective for each fixed $x \in \rx$, 
yielding a unique $\gamma(x) \in \ry$ such that $g(x, \gamma(x)) = 0$.
This ensures that for each fixed $x \in \rx$, the fast dynamics,
\begin{equation}\label{equ:yx}
  \od{y^x}{t} = \frac{1}{\eps} g(x,y^x), 
\end{equation}
is globally exponentially contracting \cite{pavliotis2008multiscale}, 
so it rapidly relaxes to the unique equilibrium $\gamma(x)$.
This property ensures the existence of a slow manifold \cite{MR635782}, 
which the system's trajectory quickly approaches.
In the limit $\eps \to 0$, the slow variables $x(t)$ converge to 
the solution of the \textit{reduced system}:
\begin{equation}\label{equ:limiting_equation}
  \od{X_0}{t}=f(X_0, \gamma(X_0)), \quad X_0(0) = x_0.
\end{equation}
This approximation introduces an $\cO(\eps)$ modeling error over any fixed 
finite time interval \cite{Papanicolaou1976Some,pavliotis2008multiscale}.

For long-time simulation to be a well-posed problem, the underlying system 
must be stable with respect to small perturbations over the time scales of 
interest.
This leads to the next assumption, which concerns the stability of the 
true physical system.

\begin{assumption}[Long-time stability]\label{assump:stability}
Let $\Phi_t^\eps(x_0, y_0) = (x(t), y(t))$ denote the flow map of the full 
system \eqref{equation}.
The system is assumed to be \emph{stable} up to the 
\emph{maximal stability time} $T_\eps$ in the sense that there exists a 
constant $A > 0$ independent of $\eps$ and $t$, such that the Jacobian of 
the flow map is uniformly bounded on the time scale of interest $T_\eps$:
\[
\| \nabla \Phi_t^\eps \|_{L^\infty} \le A, 
\]
for each $t \in [0, T_\eps]$.
\end{assumption}

This is not merely a technical convenience but a physical prerequisite.
If a system is inherently unstable, such that small initial perturbations 
lead to exponentially diverging trajectories, no numerical method can 
provide meaningful long-time point-wise predictions.

To simplify the analysis of error accumulation, a more restrictive 
technical assumption on the reduced vector field is also considered.

\begin{assumption}[Mild growth]\label{assump:growth}
The reduced vector field $F_0(x) = f(x, \gamma(x))$ satisfies that 
\begin{equation}\label{equ:growth}
\lambda_0 = \sup_{x, \tx \in \rx \atop x \ne \tx} 
\frac{\ip{F_0(x) - F_0(\tilde{x})}{x - \tilde{x}}}{|x - \tx|^2} \le 0.
\end{equation}
\end{assumption}

This assumption is a stronger condition than \Cref{assump:stability}.
It prevents the exponential growth of perturbations in the effective slow 
dynamics and facilitates a more direct proof of the stability of the 
high-order numerical models themselves. 
See further discussions in \Cref{sec:model_cond}.

\subsection{The heterogeneous multiscale method (HMM) and its high-order 
extension}
\label{sec:pre_hmm}
The classical HMM \cite{Weinan2012The,Weinan2003Analysis} provides a 
strategy to integrate the slow variables without resolving the fast 
dynamics at every step.
It employs a macroscopic solver to evolve the slow variables $x$ according 
to the reduced system \eqref{equ:limiting_equation}, and a microscopic 
solver to estimate the effective force $F_0(x) = f(x, \gamma(x))$.
The micro-solver essentially solves the nonlinear equation 
$g(x, y) = 0$ for fixed $x$, which can be achieved, for example, by 
evolving the fast dynamics \eqref{equ:yx} over a short time window.

To improve accuracy, the high-order numerical homogenization method from  
\cite{Jin2022HighOrder} seeks higher-order approximations of the slow 
manifold. 
The \textit{$k$-th order correction model}, denoted by $\HMM_k$, is given 
as follows:
\begin{equation}
    \label{equ:highmodel}
    \od{X_k}{t} = f(X_k, \Gamma_k(X_k, \eps)), \quad X_k (0) = x_0,
\end{equation}
where $\Gamma_k$ is a sequential fixed-point approximation of the 
invariant manifold defined by the recursive relation:
\begin{equation}\label{eq:iteration}
  g(x, \Gamma_{k+1}(x, \eps)) = \eps \nabla_x \Gamma_k(x, \eps) \cdot 
  f(x, \Gamma_k(x, \eps)), 
\end{equation}
with the initial value $\Gamma_0(x,\eps) = \gamma(x)$.
Numerically, the right-hand side of \eqref{eq:iteration} can be computed 
via a finite difference approximation of the directional derivative of 
$\Gamma_k(\cdot, \eps)$ along the direction $f(x, \Gamma_k(x, \eps))$
according to \cite{Jin2022HighOrder}:
\begin{equation}\label{eq:numder}
\nabla_x \Gamma_k(x, \eps) \cdot f(x, \Gamma_k(x, \eps)) = 
\lim_{\eta \to 0} \frac{\Gamma_k(x + \eta f(x, \Gamma_k(x,\eps)), \eps) - 
\Gamma_k(x, \eps)}{\eta},
\end{equation}
and then $\Gamma_{k+1}(x, \eps)$ can be calculated by the micro-solver 
by inverting the function $g(x, \cdot)$.

The following lemma from \cite{Jin2022HighOrder}, which plays a central 
role in our analysis, establishes the successive approximation properties
of this construction.
\begin{lemma}\label{lemma:asym}
For $k = 0, 1, \ldots, K$, 
\begin{displaymath}
\norm{\Gamma_{k+1}(\cdot, \eps) - \Gamma_k(\cdot, \eps)}_{C^{K- k}} 
\lesssim \eps^{k+1}.
\end{displaymath}
\end{lemma}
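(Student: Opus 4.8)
The plan is to argue by induction on $k$, with the base case $k=0$ following from a direct computation and each inductive step reducing the estimate for $\Gamma_{k+1}-\Gamma_k$ to the one for $\Gamma_k-\Gamma_{k-1}$ at the cost of exactly one derivative. The engine throughout is the uniform invertibility of $\nabla_y g$: letting $\tilde y \to y$ in \Cref{assump:dissipative} yields $\ip{\nabla_y g(x,y)\,v}{v} \le -\beta|v|^2$ for every $v \in \ry$, so $\nabla_y g(x,y)$ is uniformly negative definite with $\norm{(\nabla_y g(x,y))^{-1}} \le \beta^{-1}$. By the implicit function theorem this lets me solve $g(x,Y)=b$ for $Y = G(x,b)$, where $G$ inherits the regularity of $g$ (bounded via \Cref{assump:bound} and $\beta$). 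Since each iterate satisfies $\Gamma_{k+1}(\cdot,\eps) = G(\cdot, \eps R_k)$ with $R_k := \nabla_x \Gamma_k \cdot f(\cdot,\Gamma_k)$, I will also track that $\Gamma_k - \gamma = \sum_{j=1}^{k}(\Gamma_j-\Gamma_{j-1})$ stays $\cO(\eps)$-small by telescoping the already-established differences, so every $\Gamma_k$ remains in a fixed ball around $\gamma$ and all constants below are uniform in $\eps$.

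For the base case I subtract $g(x,\Gamma_0)=0$ from the $k=0$ instance of \eqref{eq:iteration} to obtain
\begin{equation*}
g(x,\Gamma_1)-g(x,\Gamma_0)=\eps\,\nabla_x\gamma\cdot f(x,\gamma).
\end{equation*}
Pairing with $\Gamma_1-\Gamma_0$ and invoking \Cref{assump:dissipative} gives $\beta|\Gamma_1-\Gamma_0| \le \eps\,|\nabla_x\gamma\cdot f(x,\gamma)| \lesssim \eps$ pointwise; here $\gamma$ carries one extra order of smoothness because $\nabla g \in W^{K,\infty}$, so $\nabla_x\gamma\cdot f(\cdot,\gamma)$ is still bounded up to order $K$. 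The derivatives up to order $K$ then follow from $\Gamma_1-\Gamma_0 = G(\cdot,\eps\nabla_x\gamma\cdot f(\cdot,\gamma)) - G(\cdot,0)$, since $G$ is Lipschitz in its second argument in every relevant norm.

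The inductive step is the heart of the argument. Subtracting the defining relations \eqref{eq:iteration} at levels $k$ and $k-1$ produces $g(x,\Gamma_{k+1}) - g(x,\Gamma_k) = \eps\,(R_k - R_{k-1})$, and writing $R_k - R_{k-1} = (\nabla_x\Gamma_k - \nabla_x\Gamma_{k-1})\cdot f(x,\Gamma_k) + \nabla_x\Gamma_{k-1}\cdot\bigl(f(x,\Gamma_k)-f(x,\Gamma_{k-1})\bigr)$ exhibits the difference as a sum of terms each carrying one factor of $\Gamma_k-\Gamma_{k-1}$ or of its gradient. The induction hypothesis $\norm{\Gamma_k-\Gamma_{k-1}}_{C^{K-k+1}}\lesssim\eps^k$ then controls both terms in $C^{K-k}$ — the first because the gradient costs exactly one derivative, the second through the smoothness of $f$ — so $\norm{R_k-R_{k-1}}_{C^{K-k}}\lesssim\eps^k$ and hence $\norm{g(x,\Gamma_{k+1})-g(x,\Gamma_k)}_{C^{K-k}}\lesssim\eps^{k+1}$. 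To pass from this $g$-difference back to the $\Gamma$-difference I use the solution operator once more: the mean-value representation $\Gamma_{k+1}-\Gamma_k = \bigl(\int_0^1 \nabla_b G(\cdot,\eps R_{k-1}+s\eps(R_k-R_{k-1}))\,\rd s\bigr)\,\eps(R_k-R_{k-1})$, combined with a $C^m$-algebra (Moser-type) estimate and the uniform bounds on $\nabla_b G$, gives $\norm{\Gamma_{k+1}-\Gamma_k}_{C^{K-k}}\lesssim\eps\,\norm{R_k-R_{k-1}}_{C^{K-k}}\lesssim\eps^{k+1}$.

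I expect the main obstacle to be the exact bookkeeping of the one-derivative loss together with the uniformity in $\eps$ of every constant. Concretely, I must verify that the $C^m$ bounds on the products and compositions of $f$, $g$, $G$, and the iterates never couple an $\eps$-dependent quantity in a way that lowers the recorded power of $\eps$, and that the uniform $C^{K-k+1}$-boundedness of $\Gamma_k$ genuinely propagates through the induction via the telescoping sum. The derivative loss is precisely what forces the range $k=0,\dots,K$ and the decreasing smoothness index $C^{K-k}$, so this accounting must be carried out exactly rather than absorbed into a generic constant.
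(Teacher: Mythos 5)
Your argument is sound and is essentially the standard proof of this result; note, however, that the present paper does not prove \Cref{lemma:asym} at all --- it imports it verbatim from \cite{Jin2022HighOrder}, so the comparison must be made against that reference rather than against anything in this manuscript. Your route (uniform negative-definiteness of $\nabla_y g$ from \Cref{assump:dissipative}, hence a globally defined solution operator $G$ with $\norm{(\nabla_y g)^{-1}} \le \beta^{-1}$, followed by induction on $k$ with the telescoping control of $\Gamma_k - \gamma$ and the loss of exactly one derivative per step) matches the mechanism used there, and your explicit flagging of the $C^m$ composition bookkeeping covers the only delicate point.
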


Based on this, it was shown in \cite[Theorem 3.9]{Jin2022HighOrder} that 
the manifold $\{ y = \Gamma_k(x, \eps) \}$ is a global attractor of 
\eqref{equation} with error $\cO(\eps^{k+1})$: 
\begin{equation}\label{eq:attractor}
  |z_k(t)| \lesssim \eps^{k+1} + e^{-\beta t / \eps} |z_k(0)|, \quad 
  z_k(t) = y(t) - \Gamma_k(x(t), \eps).
\end{equation}
In addition, after an initial layer, the $k$-th correction model 
\eqref{equ:highmodel} can approximate the original system \eqref{equation} 
up to an error of order $\cO(\eps^{k+1})$ on any finite time horizon: 
\begin{equation}\label{eq:decouple}
  |x(t) - X_k(t)|^2 \lesssim e^{C_k t} \left( |x(0) - X_k(0)|^2 + 
  \eps^{2k+2} + \eps \beta^{-1} |z_k(0)|^2 \right),
\end{equation}
according to \cite[Theorem 3.10]{Jin2022HighOrder}.
Here the exponential term $e^{C_k t}$ in the bound \eqref{eq:decouple} 
prevents an available long-time error estimate, which is the primary 
analysis gap we aim to address in the next section.

\section{Long-time modeling error analysis} 
\label{sec:model}
This section addresses the analysis gap identified in the previous section 
by deriving a new modeling error bound for the high-order correction 
models.
The central result demonstrates linear, rather than exponential, error 
accumulation over long time horizons, providing the theoretical 
justification for using these methods for long-time simulations.

\subsection{A linear error accumulation bound}
\label{sec:model_longtime}

The first main theoretical result of this paper establishes that the 
modeling error accumulates linearly in time within the maximal interval of 
stability defined by \Cref{assump:stability}.

\begin{theorem}\label{thm:longtime}
Let $T_\eps$ be the maximal stability time of \eqref{equation} 
according to \Cref{assump:stability}.
If $X_k(0) = x(0)$ and the initial state is off the slow manifold by at 
most $|z_k(0)| = |y(0) - \Gamma_k(x(0), \eps)| = \cO(\eps^{k+1})$, 
then there exists a constant $C > 0$ independent of $\eps$, $t$, and $A$
such that 
\begin{displaymath}
|x(t) - X_k(t)| \le C A \eps^{k+1} (t + 1),
\end{displaymath}
for each $t \in [0, T_\eps]$ and $k \le K - 1$.
\end{theorem}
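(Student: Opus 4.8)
The plan is to compare the true trajectory $(x(t),y(t))=\Phi_t^\eps(x_0,y_0)$ with the \emph{lifted} model trajectory $(X_k(t),Y_k(t))$ defined by $Y_k(t):=\Gamma_k(X_k(t),\eps)$, and to convert the flow-map stability bound of \Cref{assump:stability} into a linear-in-time estimate via a nonlinear variation-of-constants (Alekseev--Gr\"obner) argument. First I would note that, since $|z_k(0)|=\cO(\eps^{k+1})$, the attractor estimate \eqref{eq:attractor} gives $|z_k(t)|=|y(t)-\Gamma_k(x(t),\eps)|\lesssim\eps^{k+1}$ uniformly in $t\ge 0$, so there is no initial-layer loss. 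The lift satisfies the slow equation exactly, $\dot X_k=f(X_k,Y_k)$, but the fast equation only up to a defect $d(t):=\dot Y_k-\eps^{-1}g(X_k,Y_k)$ that lives purely in the fast component. Using the recursion \eqref{eq:iteration} to replace $\nabla_x\Gamma_k\cdot f(X_k,\Gamma_k)$ by $\eps^{-1}g(X_k,\Gamma_{k+1}(X_k,\eps))$, and then \Cref{lemma:asym}, one finds
\begin{displaymath}
d(t)=\tfrac{1}{\eps}\bigl(g(X_k,\Gamma_{k+1}(X_k,\eps))-g(X_k,\Gamma_k(X_k,\eps))\bigr),\qquad |d(t)|\lesssim\eps^{k}.
\end{displaymath}

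Next I would apply Alekseev--Gr\"obner to the full vector field, splitting off the initial mismatch $\delta_0=(0,-z_k(0))$ with $|\delta_0|=\cO(\eps^{k+1})$, to obtain
\begin{displaymath}
(X_k(t),Y_k(t))-(x(t),y(t)) = R_0(t) + \int_0^t \nabla\Phi_{t-s}^\eps\big|_{(X_k(s),Y_k(s))}\,(0,d(s))\,\rd s,
\end{displaymath}
where the homogeneous term obeys $|R_0(t)|\le A|\delta_0|\lesssim A\eps^{k+1}$. The naive bound $\norm{\nabla\Phi^\eps}\le A$ controls $R_0$, but applied bluntly to the integrand it yields only $A\eps^{k}t$, one power of $\eps$ short. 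Recovering that power is the heart of the argument: because the kick $(0,d(s))$ points in the fast direction and the fast dynamics are exponentially contracting by \Cref{assump:dissipative}, its slow response is an order of $\eps$ smaller than its size.

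The crux is therefore to show that the forward two-time propagator $\nabla\Phi_{t-s}^\eps$, acting on a purely fast perturbation, produces only an $\cO(\eps)$-damped slow component. I would establish this by analysing the block variational equation $\dot\xi=\nabla_x f\,\xi+\nabla_y f\,\eta$, $\dot\eta=\eps^{-1}(\nabla_x g\,\xi+\nabla_y g\,\eta)$ with data $(\xi,\eta)(s)=(0,d(s))$: the dissipativity $\ip{\nabla_y g\,\eta}{\eta}\le-\beta|\eta|^2$ forces $|\eta(\tau)|\lesssim e^{-\beta(\tau-s)/\eps}|d(s)|+\cO(|\xi|)$, so integrating the $\xi$-equation over the $\cO(\eps)$ transient transfers only $\int_s^{t}\norm{\nabla_y f}\,|\eta|\,\rd\tau\lesssim \eps|d(s)|\lesssim\eps^{k+1}$ into the slow directions. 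After the transient the perturbation is $\cO(\eps^{k+1})$ and near-tangential to the slow manifold, and its forward propagation to time $t$ stays $\lesssim A\eps^{k+1}$. Consequently each integrand contributes $\lesssim A\eps^{k+1}$ uniformly in $s$, the integral is $\lesssim A\eps^{k+1}t$, and combining with $R_0$ gives $|x(t)-X_k(t)|\le CA\eps^{k+1}(t+1)$ on $[0,T_\eps]$.

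The main obstacle is making the final two items of the transient analysis rigorous. The two-time propagator is \emph{not} bounded by $A$ in general, since \Cref{assump:stability} controls only $\nabla\Phi_t^\eps$ from time $0$, while backward fast expansion makes $(\nabla\Phi_s^\eps)^{-1}$ large in the fast directions. One must therefore exploit forward contraction in the fast directions to absorb the kick, together with the approximate invariance of $\{y=\Gamma_k(x,\eps)\}$, to reduce the slow-tangential propagation from the intermediate time back to the time-$0$ Jacobian bound $A$; a continuation (bootstrap) argument is also needed so that the Jacobian bound applies along the lifted trajectory, which stays $\cO(\eps^{k+1})$-close to the exact flow. I would note that under the stronger \Cref{assump:growth} the reduced flow of $F_0$ is non-expansive, so this step collapses and the estimate follows directly with $A$ replaced by an absolute constant.
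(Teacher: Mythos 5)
Your overall strategy---compare $(x,y)$ with a lifted model trajectory, measure the defect of the lift against the full vector field, and convert it into a linear-in-time bound through the flow-map stability of \Cref{assump:stability}---is exactly the paper's. But your choice of lift, $Y_k=\Gamma_k(X_k,\eps)$, is one order too low, and that single choice creates the gap that your proposal does not close. With that lift the fast defect is $d=\eps^{-1}\bigl(g(X_k,\Gamma_{k+1})-g(X_k,\Gamma_k)\bigr)=\cO(\eps^{k})$, and you are forced to recover the missing power of $\eps$ by arguing that $\nabla\Phi_{t-s}^\eps$ applied to a purely fast kick has only an $\cO(\eps)$ slow response. That claim is plausible (the transient estimate $|\xi|\lesssim\eps|d|/\beta$ from \Cref{assump:dissipative}, followed by the semigroup splitting $\nabla\Phi_{t-s}^\eps=\nabla\Phi_{t-s-\sigma}^\eps\cdot\nabla\Phi_{\sigma}^\eps$ with $\sigma\sim\eps|\log\eps|$ and the $A$-bound on the first factor, can likely be made rigorous), but you explicitly leave it as an obstacle rather than proving it, and it is by far the hardest step of your argument. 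Incidentally, your stated worry that the two-time propagator is not controlled by \Cref{assump:stability} is misplaced: the assumption bounds $\|\nabla\Phi_{\tau}^\eps\|_{L^\infty}$ uniformly over phase space for every duration $\tau\in[0,T_\eps]$, and since the system is autonomous this is precisely a bound on the propagator from any intermediate time; no inverse Jacobians are needed. The real deficiency is only that your defect is $\cO(\eps^k)$ rather than $\cO(\eps^{k+1})$.

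The paper removes the difficulty entirely by lifting onto the \emph{next} manifold: it sets $(u(s),v(s))=\Phi_{t-s}^\eps\bigl(X_k(s),\Gamma_{k+1}(X_k(s),\eps)\bigr)$. Then the fast component of the defect is $\nabla\Gamma_{k+1}\cdot f(X_k,\Gamma_k)-\eps^{-1}g(X_k,\Gamma_{k+1})$, and the recursion \eqref{eq:iteration} replaces $\eps^{-1}g(X_k,\Gamma_{k+1})$ by $\nabla\Gamma_k\cdot f(X_k,\Gamma_k)$ \emph{exactly}, cancelling the $\eps^{-1}$ and leaving $(\nabla\Gamma_{k+1}-\nabla\Gamma_k)\cdot f(X_k,\Gamma_k)=\cO(\eps^{k+1})$ by \Cref{lemma:asym}; the slow component $f(X_k,\Gamma_k)-f(X_k,\Gamma_{k+1})$ is $\cO(\eps^{k+1})$ for the same reason. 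The crude bound $\|\nabla\Phi_{t-s}^\eps\|\le A$ then already gives $|u'(s)|\lesssim A\eps^{k+1}$, and integration plus the initial-condition term finishes the proof in a few lines, with no transient analysis, no fast--slow splitting of the variational equation, and no bootstrap. I recommend you adopt this lift; it turns the ``heart of the argument'' in your proposal into a triviality.
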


\begin{proof}
Fix the final time $t \in [0, T_\eps]$.
Define an auxiliary trajectory $(u(s), v(s))$ by evolving the full system 
in time from the state of the high-order model at time $s$:
\[(u(s), v(s)) = \Phi_{t-s}^\eps (X_k(s), \Gamma_{k+1}(X_k(s), \eps)),
\quad s \in [0, t].
\]
At the endpoints, one has 
\begin{gather*}
(u(0), v(0)) = \Phi_{t}^\eps (X_k(0), \Gamma_{k+1}(X_k(0), \eps)), \\
(u(t), v(t)) = (X_k(t), \Gamma_{k+1}(X_k(t), \eps)).
\end{gather*}
According to the flow relation, the derivative of this trajectory with 
respect to $s$ is given by
\[
(u'(s), v'(s)) = \nabla \Phi_{t-s}^\eps (X_k, \Gamma_{k+1}) \cdot 
\begin{pmatrix}
f(X_k, \Gamma_{k}) - f(X_k, \Gamma_{k+1}) \\
\nabla \Gamma_{k+1} \cdot f(X_k, \Gamma_k) - 
\frac1\eps g(X_k, \Gamma_{k+1})
\end{pmatrix}.
\]
Here we omit the independent variable $s$ for brevity.
Using the definition of $\Gamma_{k+1}$ in \eqref{eq:iteration}, one has 
that 
$g(X_k, \Gamma_{k+1}) = \eps \nabla_x \Gamma_k \cdot f(X_k, \Gamma_k)$.
By \Cref{lemma:asym} and the smoothness of $f$ in \Cref{assump:bound}, the 
norms of these components with $k \le K - 1$ can be bounded by
\begin{gather*}
|f(X_k, \Gamma_k) - f(X_k, \Gamma_{k+1})| \le \| \nabla_y f \|_{C^0}
\cdot |\Gamma_k - \Gamma_{k+1}| \lesssim \eps^{k+1}, \\
|(\nabla \Gamma_{k+1} - \nabla \Gamma_k) \cdot f(X_k, \Gamma_k)| \le 
\|\nabla \Gamma_{k+1} - \nabla \Gamma_k\|_{C^0} \cdot \|f\|_{C^0} 
\lesssim \eps^{k+1}.
\end{gather*}
Using the stability of the flow map from \Cref{assump:stability}, 
$\|\nabla \Phi_{t-s}^\eps\| \le A$, one can bound the rate of change of 
$u(s)$ by $|u'(s)| \lesssim A \eps^{k+1}$.
Integrating this from $s = 0$ to $s = t$ yields the accumulated error over 
the interval:
\[
|u(t) - u(0)| \le \int_0^t |u'(s)| \,\rd s \lesssim A \eps^{k+1} t.
\]
The error from the initial conditions can be bounded using 
\Cref{assump:stability}:
\begin{multline*}
|x(t) - u(0)| \\ \le |\Phi_t^\eps(x_0, y_0) - 
\Phi_t^\eps(X_k(0), \Gamma_{k+1}(X_k(0), \eps))| \lesssim 
A |y_0 - \Gamma_{k+1}(x_0, \eps)| \lesssim A \eps^{k+1}.
\end{multline*}
Finally, combining the two parts via the triangle inequality, 
one obtains that 
\[
|x(t) - X_k(t)| \le |x(t) - u(0)| + |u(t) - u(0)| \lesssim
A \eps^{k+1} (t + 1),
\]
which yields the final result.
\end{proof}

\begin{remark}
The condition, $|z_k(0)| = \cO(\eps^{k+1})$, means that the initial layer 
effect is ignored. 
In practice, the full system is numerically evolved in the initial layer, 
which has a thickness of order $\cO(\eps |\log \eps|)$ as shown in 
\eqref{eq:attractor}, after which this condition is reasonably satisfied.
See discussions in \cite[Remark 3.6]{Jin2022HighOrder}.
\end{remark}

\begin{remark}
The proof sidesteps a direct \Gronwall-type analysis, which leads to the 
exponential term, by employing an auxiliary trajectory and leveraging the 
global stability of the flow map. 
This approach more accurately reflects the error propagation mechanism in 
well-posed long-time problems.
\end{remark}

\subsection{Discussion of stability and error propagation}
\label{sec:model_cond}

The proof of \Cref{thm:longtime} relies only on the general well-posedness 
condition in \Cref{assump:stability}.
If the more restrictive mild growth condition in \Cref{assump:growth} 
holds, a more refined error estimate can be obtained via a more direct 
analysis.

A crucial element for long-time error control is that the high-order 
correction models \eqref{equ:highmodel} inherit the stability properties 
of the underlying reduced system \eqref{equ:limiting_equation} under 
\Cref{assump:growth}.
Let $F_k(x) = f(x, \Gamma_k(x, \eps))$ be the vector field of the $k$-th 
order model and define its one-sided Lipschitz constant $\lambda_k(\eps)$
analogously to $\lambda_0$ in \eqref{equ:growth}:
\[
\lambda_k(\eps) = \sup_{x, \tx \in \bR^{n_x} \atop x \ne \tx} 
\frac{\sinn{F_k(x) - F_k(\tx), x - \tx}}{|x - \tx|^2}.
\]
The difference between successive constants can be bounded by: 
\begin{multline*}
|\lambda_{k+1}(\eps) - \lambda_k(\eps)| \\
\le \sup_{x, \tx \in \rx \atop x \ne \tx} 
\frac{|\sinn{F_{k+1}(x) - F_{k+1}(\tx)-F_{k}(x) + F_{k}(\tx), x - \tx}|}
{|x - \tx|^2} \lesssim \| \nabla F_{k+1} - \nabla F_k \|_{C^0},
\end{multline*}
where $k \le K - 1$.
By the chain rule, 
$\nabla F_k = \nabla_x f + \nabla_y f \cdot \nabla_x \Gamma_k$.
Thus, the difference is bounded by $\|f\|_{C^2} \cdot 
\| \Gamma_{k+1}(\cdot, \eps) - \Gamma_k(\cdot, \eps) \|_{C^1}$.
Using \Cref{lemma:asym}, one obtains that 
\[
|\lambda_{k+1}(\eps) - \lambda_k(\eps)| \lesssim
\| \Gamma_{k+1}(\cdot, \eps) - \Gamma_k(\cdot, \eps) \|_{C^1} 
\lesssim \eps^{k+1}.
\]
This implies that the stability constant $\lambda_k(\eps)$ of the $k$-th 
order model is a small perturbation of $\lambda_0$:
$
\lambda_k(\eps) = \lambda_0 + \cO(\eps).
$
By \Gronwall's inequality, the separation of two trajectories of the 
$k$-th order model is amplified by $e^{\lambda_k(\eps) t}$:
\begin{displaymath}
|X_k(t) - \tilde{X}_k(t)| \le e^{\lambda_k(\eps) t} 
|X_k(0) - \tilde{X}_k(0)|.
\end{displaymath}
This, together with \Cref{assump:growth}, ensures that 
$\lambda_k(\eps) \lesssim \eps$, which guarantees that the maximal 
stability time of the high-order correction models \eqref{equ:highmodel} 
is at least of order $\eps^{-1}$.
This confirms that the correction models are themselves well-posed for 
long-time integration, and leads to the following refined long-time 
modeling error estimate under \Cref{assump:growth}.

\begin{theorem}\label{thm:longtime_lambda}
Under the same conditions as in \Cref{thm:longtime} and with 
\Cref{assump:growth}, there exists a constant $C > 0$ independent of 
$\eps$ and $t$ such that 
\[
|x(t) - X_k(t)| \le C \eps^{k+1} \zeta_k(\eps, t),
\]
where $\zeta_k(\eps, t) = \int_0^t e^{\lambda_k(\eps) s} \,\rd s$.
Furthermore, the maximal stability time $T_\eps$ of the high-order models 
\eqref{equ:highmodel} is determined by 
$\lambda_k(\eps)$: $T_\eps \sim \lambda_k(\eps)^{-1}$ if 
$\lambda_k(\eps) > 0$, or $T_\eps = \infty$ if $\lambda_k(\eps) \le 0$.
The behavior of the error growth factor $\zeta_k$ depends on the sign and 
magnitude of $\lambda_k(\eps)$ as follows: 
\begin{itemize}
\item If $\lambda_k(\eps) \lesssim \eps^m$, then 
$\zeta_k(\eps, t) \lesssim t$ up to $T_\eps \sim \eps^{-m}$.
\item If $\lambda_k(\eps) \le 0$, then $\zeta_k(\eps, t) \le t$ up to 
$T_\eps = \infty$.
\item If $\lambda_k(\eps) \lesssim - \eps^m$, then 
$\zeta_k(\eps, t) \lesssim \min \{t, \eps^{-m}\}$ up to $T_\eps = \infty$.
\end{itemize}
\end{theorem}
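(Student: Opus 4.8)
The plan is to mirror the auxiliary-trajectory argument of Theorem~\ref{thm:longtime}, but replace the crude bound $\|\nabla\Phi_{t-s}^\eps\| \le A$ with the sharper, sign-sensitive contraction estimate that \Cref{assump:growth} affords through $\lambda_k(\eps)$. The point is that under the mild growth condition the flow map of the $k$-th order model is not merely bounded but contracts (or grows only mildly) at a rate controlled by $\lambda_k(\eps)$, so the integrated error picks up the weight $e^{\lambda_k(\eps)(t-s)}$ rather than a uniform constant.

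First I would reuse the auxiliary trajectory $(u(s),v(s)) = \Phi_{t-s}^\eps(X_k(s),\Gamma_{k+1}(X_k(s),\eps))$ and the identical computation of $(u'(s),v'(s))$, yielding $|$source term$| \lesssim \eps^{k+1}$ from \Cref{lemma:asym}. The essential change is how this instantaneous error is propagated over the remaining time $t-s$. Instead of bounding the propagator by $A$, I would establish that the slow component of the linearized flow contracts at rate $\lambda_k(\eps)$: concretely, one shows $|u(t)-u(0)| \lesssim \int_0^t e^{\lambda_k(\eps)(t-s)}\eps^{k+1}\,\rd s$, so that after the substitution $r = t-s$ the accumulated error becomes $\eps^{k+1}\int_0^t e^{\lambda_k(\eps)r}\,\rd r = \eps^{k+1}\zeta_k(\eps,t)$. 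The initial-condition error $|x(t)-u(0)|$ is likewise weighted by $e^{\lambda_k(\eps)t} \le \zeta_k(\eps,t)$ (up to absorbing constants into $C$), and the triangle inequality assembles the claimed bound $|x(t)-X_k(t)| \le C\eps^{k+1}\zeta_k(\eps,t)$.

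Next I would treat the three regimes for $\zeta_k$ as elementary consequences of evaluating $\int_0^t e^{\lambda_k(\eps)s}\,\rd s$. If $\lambda_k(\eps) \le 0$ the integrand is at most $1$, so $\zeta_k \le t$ for all $t$, giving $T_\eps = \infty$. If $\lambda_k(\eps) \lesssim -\eps^m < 0$, the integral converges as $t\to\infty$ to $|\lambda_k(\eps)|^{-1} \lesssim \eps^{-m}$, whence $\zeta_k \lesssim \min\{t,\eps^{-m}\}$. If $\lambda_k(\eps) \lesssim \eps^m > 0$, then on the window $t \le T_\eps \sim \eps^{-m}$ one has $\lambda_k(\eps)t \lesssim 1$, so $e^{\lambda_k(\eps)s} = 1 + \cO(1)$ uniformly and $\zeta_k \lesssim t$; the stability-time scaling $T_\eps \sim \lambda_k(\eps)^{-1}$ follows from the \Gronwall\ amplification $e^{\lambda_k(\eps)t}$ remaining $\cO(1)$ precisely until $t \sim \lambda_k(\eps)^{-1}$, which is the threshold beyond which the bound ceases to be useful.

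The main obstacle is the propagator estimate itself, namely justifying $\|\nabla\Phi_{t-s}^\eps\|$ acts on the slow source term with effective rate $\lambda_k(\eps)$ rather than the rate of the \emph{true} full system. The subtlety is that $\Phi^\eps$ is the flow of the original stiff system~\eqref{equation}, not of the reduced model, so one cannot directly read off $\lambda_k(\eps)$ from it; the fast directions contract at rate $\beta/\eps$ while the slow directions are governed by $F_k$. I would resolve this by exploiting the attractor estimate~\eqref{eq:attractor}, which confines the auxiliary trajectory to an $\cO(\eps^{k+1})$ neighborhood of the manifold $\{y=\Gamma_k(x,\eps)\}$, so that the slow component of $\nabla\Phi^\eps$ is, up to $\cO(\eps)$ corrections, the flow of the effective field $F_k$ whose one-sided Lipschitz constant is exactly $\lambda_k(\eps)$. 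The $\cO(\eps)$ discrepancy between $\lambda_k(\eps)$ and the genuine slow contraction rate of $\Phi^\eps$ is harmless because it is dominated by the $\eps^{k+1}$ prefactor over the stated horizons, but making this reduction rigorous — decomposing the propagator into slow and fast blocks and controlling the coupling — is where the real work lies.
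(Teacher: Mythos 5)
Your overall architecture --- auxiliary trajectory plus a $\lambda_k(\eps)$-weighted propagator bound --- is not what the paper does, and the step you yourself flag as ``where the real work lies'' is a genuine gap rather than a detail. To rerun the argument of \Cref{thm:longtime} with the weight $e^{\lambda_k(\eps)(t-s)}$ in place of $A$, you must prove that the slow block of $\nabla\Phi^\eps_{t-s}$, along trajectories near the manifold, grows no faster than $e^{\lambda_k(\eps)(t-s)}$ with no $\cO(1)$ loss in the exponent. That is a Fenichel-type slow--fast splitting of the linearized flow with control of the coupling; \Cref{assump:growth} only gives the one-sided Lipschitz constant of the reduced field $F_0$ (and, by the perturbation argument of \Cref{sec:model_cond}, of $F_k$), which does not by itself transfer to the flow map of the full stiff system. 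Since the whole content of the theorem in the regime $\lambda_k(\eps)<0$ is that the error stays bounded for all time, even an $\cO(\eps)$ slippage in the effective contraction rate of the propagator would alter the conclusion, so this step cannot be deferred. A smaller issue: your bound $e^{\lambda_k(\eps)t}\le\zeta_k(\eps,t)$ for the initial-condition term fails for small $t$ (where $\zeta_k(\eps,t)\approx t$), so recovering the stated bound, which has no additive ``$+1$'' unlike \Cref{thm:longtime}, would need a separate short-time argument.

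The paper's proof never touches the flow map. It differentiates $|x-X_k|^2/2$ directly and splits $f(x,y)-f(X_k,\Gamma_k(X_k,\eps))$ into two pieces: $f(x,y)-f(x,\Gamma_k(x,\eps))$, which is $\cO(\eps^{k+1})$ pointwise by the attractor estimate \eqref{eq:attractor} together with the hypothesis $|z_k(0)|=\cO(\eps^{k+1})$, and $F_k(x)-F_k(X_k)$, which pairs against $x-X_k$ with constant $\lambda_k(\eps)$ by the very definition of that constant. \Gronwall's inequality then yields $|x-X_k|\lesssim\eps^{k+1}\zeta_k(\eps,t)$ in a few lines. Your case analysis of $\zeta_k$ in the three regimes matches the paper and is correct; it is only the main inequality that should be obtained by this simpler direct route.
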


\begin{proof}
Differentiating $|x - X_k|^2 / 2$ and applying \eqref{eq:attractor} leads 
to the following inequality:
\begin{multline*}
\frac12 \od{}{t} |{x - X_k}|^2 = 
\sinn{x - X_k, f(x,y) - f(X_k, \Gamma_k(X_k,\eps))}\\ = 
\sinn{x - X_k, f(x,y) - f(x, \Gamma_k(x, \eps))} + 
\sinn{x - X_k, f(x, \Gamma_k(x,\eps)) - f(X_k, \Gamma_k(X_k, \eps))} \\ 
\le C |x - X_k| \cdot |z_k(t)| + \lambda_k(\eps) \cdot |x - X_k|^2 
\le C \eps^{k+1} |x - X_k| + \lambda_k(\eps) \cdot |x - X_k|^2,
\end{multline*}
which yields $|x(t) - X_k(t)| \lesssim \eps^{k+1} \zeta_k(\eps, t)$.
If $\lambda_k(\eps, t) \lesssim \eps^m$, 
then stability holds up to $T_\eps \sim \eps^{-m}$, and for 
$t \in [0, T_\eps]$, one has that 
\[
\zeta_k(\eps, t) \le \int_0^t e^{c \eps^m s} \,\rd s 
\lesssim \eps^{-m} (e^{c \eps^m t} - 1) \lesssim t.
\]
If $\lambda_k(\eps) \le 0$, then $\zeta_k(\eps, t) \le t$.
If $\lambda_k(\eps) \lesssim - \eps^m$, then 
\[
\zeta_k(\eps, t) \le \eps^{-m} (1 - e^{-c \eps^m t}) \lesssim
\begin{cases}
t,\ & t \lesssim \eps^{-m}, \\
\eps^{-m},\ & t \gtrsim \eps^{-m}.
\end{cases}
\]
This completes the proof.
\end{proof}

\subsection{Numerical illustration of error accumulation}
\label{sec:model_failure}

To illustrate the failure of low-order methods and to validate the 
theoretical predictions in \Cref{thm:longtime}, let us consider a simple 
linear example with combined phase and amplitude drifts:
\begin{equation}\label{equ:model_failure}
\left\{
\begin{aligned}
&\od{x}{t} = x + y, \\
&\od{y}{t} = \frac1\eps \left( - x + J x - y\right), \\
\end{aligned}    
\right.
\end{equation}
where $x, y \in \bR^2$, and $J = \left( \begin{smallmatrix}
0 & 1 \\ -1 & 0 \end{smallmatrix} \right)$.
This system is analytically tractable, allowing for precise comparison.
The four eigenvalues $\alpha_{\pm,\pm}$ of \eqref{equ:model_failure} are 
given by 
\begin{gather*}
\alpha_{-,\pm} = \frac12 \left( -\mu_\pm - \eps^{-1} + 1 \right) = 
-\eps^{-1} + \cO(1), \\
\alpha_{+,\pm} = \frac12 \left( \mu_\pm - \eps^{-1} + 1 \right) = 
\pm i + (1 \pm i) \eps + (3 \mp i) \eps^2 + (1 \mp 9 i) \eps^3 + 
\cO(\eps^4),
\end{gather*}
where $\mu_\pm = \sqrt{\eps^{-2} + (- 2 \pm 4i) \eps^{-1} + 1}$.
The reduced and higher-order models for this system can be derived 
analytically:
\[
\od{X_0}{t} = J X_0, \quad 
\od{X_1}{t} = \eps X_1 + (1+\eps) J X_1, \quad 
\od{X_2}{t} = (\eps + 3 \eps^2) X_2 + (1+\eps-\eps^2-2\eps^3) J X_2.
\]
The eigenvalues of the $k$-th order model can be shown to approximate the 
slow eigenvalues $\alpha_{+,\pm}$ of the full system with an error of 
$\cO(\eps^{k+1})$.

The system is solved numerically with $\eps = 0.05$ up to a long time 
$T = 25 > \eps^{-1}$ using the scheme in \cite{Jin2022HighOrder}. 
The numerical results are shown in \Cref{fig:naive}.

\begin{figure}[htp]
    \centering
    \begin{subfigure}{.45\linewidth}
        \label{fig:naive.sub.1}
        \includegraphics[width=\linewidth]{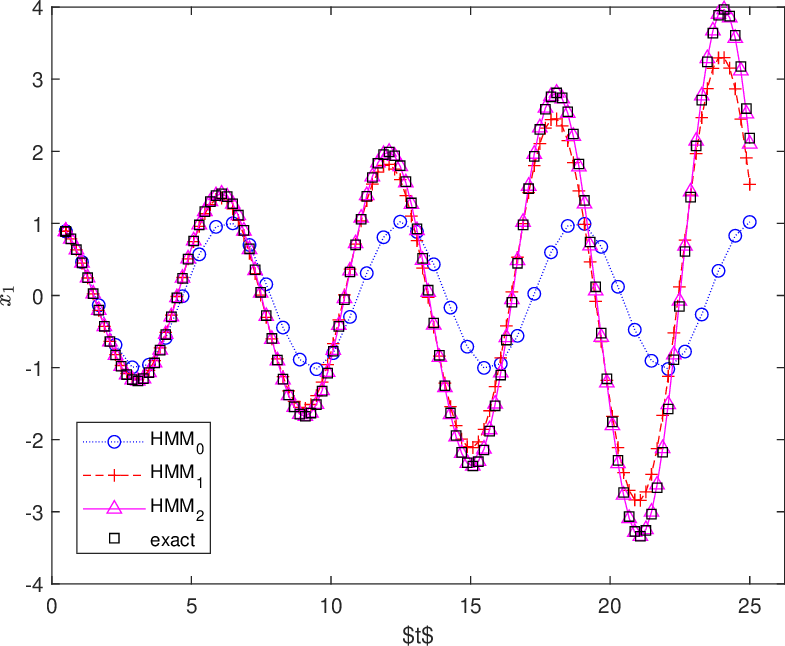}
        \caption{First element of slow variables.}
    \end{subfigure}
    \hfill
    \begin{subfigure}{.45\linewidth}
        \label{fig:naive.sub.2}
        \includegraphics[width=\linewidth]{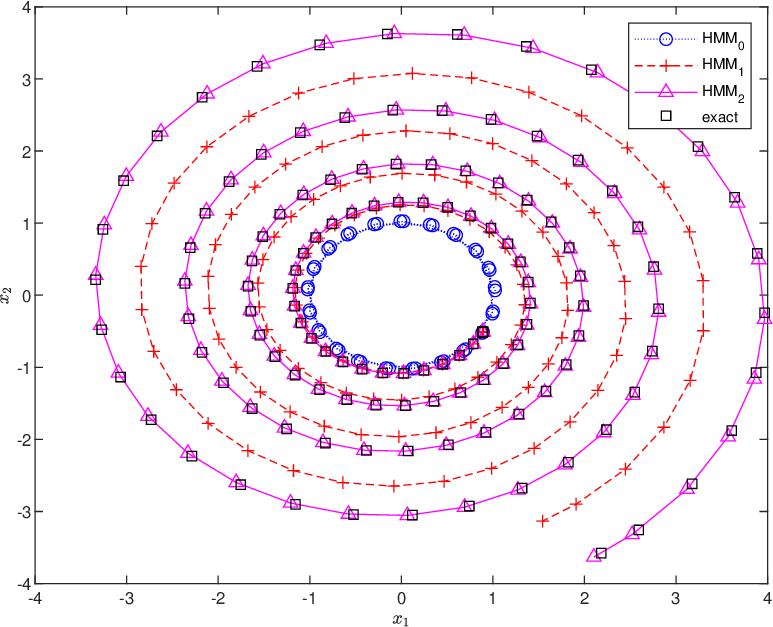}
        \caption{Trajectory of slow variables.}
    \end{subfigure}
    \caption{Numerical results for the linear model 
    \eqref{equ:model_failure} with $\eps = 0.05$ and $T = 25$.}
    \label{fig:naive}
\end{figure}

As predicted by \Cref{thm:longtime}, the error of the $k$-th order model 
accumulates linearly with a rate proportional to $\eps^{k+1}$. 
The trajectory of the classical HMM exhibits a significant phase lag and 
incorrect amplitude, drifting substantially from the true solution $x(t)$. 
This is a clear visual manifestation of the $\cO(1)$ accumulated error 
predicted for low-order methods over long times.
Even the first-order model shows visible deviation over time.
Only the second-order model remains close to the true solution for the 
entire simulation duration.
This provides an immediate validation of the theory, perfectly 
illustrating that higher-order models are essential for long-time accuracy.

\section{An efficient multigrid-in-time algorithm}
\label{sec:grid}

The preceding analysis establishes that high-order models are necessary 
for long-time accuracy. 
This section addresses the second major challenge: the redundant 
computational cost of direct high-order methods. 
An efficient multigrid-in-time (MGT) algorithm is introduced that achieves 
high-order accuracy at a cost comparable to low-order methods.

\subsection{Motivation and algorithm principle}

The numerical construction of the $k$-th order model involves a recursive 
procedure in \eqref{eq:iteration} and \eqref{eq:numder}, where computing 
$\Gamma_k$ requires information from $\Gamma_{k-1}$. 
This leads to a computational cost for the $k$-th order effective force 
$F_k(x)$ that scales as $\cO(2^k)$ relative to the classical HMM force 
$F_0(x)$. 
This exponential growth in complexity renders high-order models 
impractical for long-time simulations.

The key insight that enables the MGT approach is that the correction 
terms, $\delta_k(x) = F_k(x) - F_{k-1}(x)$, are not only small in 
magnitude, i.e., $\cO(\eps^k)$, but are also smooth functions of $x$.
As the slow variables $x(t)$ evolve smoothly in time, the correction term 
evaluated along the trajectory, $\delta_k(x(t))$, also varies smoothly 
with time.
The MGT framework systematically exploits this regularity to reduce 
redundant computation.
The core idea of the MGT algorithm is to use multiple time levels.
Instead of computing this expensive term at every fine time step, it is 
evaluated only on a coarse time grid and its effect is propagated to the 
fine grid via extrapolation.
This approach avoids redundant computations by reusing existing 
calculation results as much as possible. 

\subsection{The two-grid method}\label{sec:grid_two}
To illustrate the fundamental idea, we first describe a two-grid method.
The goal is to solve the expensive high-order model
\begin{equation}\label{eq:twogrid-high}
  \od{X}{t} = F(X),
\end{equation}
by decomposing the vector field as $F(X) = \hat{F}(X) + \delta(X)$, 
where $\hat{F}(X)$ is a cheaper, lower-order approximation and 
$\delta(X)$ is the expensive correction.
The algorithm proceeds as follows:
\begin{enumerate}
  \item Define a fine time grid with step size $\Delta t$ and a coarse 
  grid with step size $\tau = P \Delta t$ for some integer $P > 1$.
  \item At coarse time steps $t_n = n \tau$, the expensive correction term
  $\delta(\hat{X}(t_n))$ is accurately evaluated and stored.
  \item For fine steps $t \in (t_n, t_{n+1})$, use an ODE solver with step 
  size $\Delta t$ to evolve the cheaper dynamics, corrected by an 
  extrapolated value of the expensive term:
  \begin{equation}\label{eq:twogrid}
    \od{\hat{X}}{t} = \hat{F}(\hat{X}) + 
    \cE[\delta(\hat{X}(\cdot)); \Delta](t), \quad t \in (t_n, t_{n+1}),
  \end{equation}
  where $\Delta = \{ t_{n-m}, \ldots, t_n \}$ is a stencil of recent 
  coarse grid points and $\cE$ is a polynomial extrapolation operator
  defined in \eqref{eq:extrap}.
\end{enumerate}

\begin{remark}\label{rmk:initial_extrap}
To ensure enough data points are available for stable extrapolation, the 
full high-order model \eqref{eq:twogrid-high} must be solved accurately 
for a short initial period $t \in [0, t_m]$. 
In other words, 
\begin{equation}\label{eq:twogrid-initial}
\od{\hat{X}}{t} = F(\hat{X}), \quad t \in [0, t_m].
\end{equation}
\end{remark}

\begin{remark}\label{rmk:differ_extrap}
In practice, the high-order vector field takes the form of 
$F_{k+1}(X) = f(X, \Gamma_{k+1}(X, \eps))$.
In addition to the approach of direct correction of the vector field 
$F_k(X)$, one can also correct the $k$-th order invariant manifold 
$\Gamma_k(X, \eps)$.
In that case, the two-grid method evolves the system:
\[
\od{\hat{X}_k}{t} = f(\hat{X}_k, \Gamma_k(\hat{X}_k, \eps) + 
\cE[\Gamma_{k+1}(\hat{X}_k(\cdot), \eps) - 
\Gamma_k(\hat{X}_k(\cdot), \eps); \Delta](t)).
\]
The two methods have many similarities in terms of numerical analysis, 
complexity analysis, and algorithmic implementation. 
Therefore, we will focus on the method of directly correcting the vector 
field.
\end{remark}

\subsection{The full multigrid framework}
The two-grid concept generalizes naturally to a full multigrid hierarchy. 
To approximate the solution of an accurate $(k+L)$-th order model, 
$\od{X_{k+L}}{t} = F_{k+L}(X_{k+L})$, a hierarchy of $L+1$ nested time 
grids, $\cG_0 \supset \cG_1 \supset \cdots \supset \cG_L$, is introduced, 
with corresponding time steps 
$\Delta t = \tau_0 < \tau_1 < \cdots < \tau_L$, 
where $\tau_\ell$ is a multiple of $\tau_{\ell-1}$ for each 
$\ell = 1, 2, \ldots, L$.
Here $\cG_0$ is the grid where the macroscopic solver is used, and 
$\cG_1, \cG_2, \ldots, \cG_L$ are the correction layers.
The target vector field is decomposed via a telescoping sum:
\[
F_{k+L}(X) = F_k(X) + \sum_{\ell=1}^{L} 
(F_{k+\ell}(X) - F_{k+\ell-1}(X)) = F_k(X) + \sum_{\ell=1}^{L}
\delta_{k+\ell}(X).
\]
The MGT-HMM algorithm, denoted by $\HMM_k^L$, integrates the system on 
the finest grid $\cG_0$ with step $\Delta t$, using the base model $F_k$
and adding extrapolated corrections from all higher levels.
The correction $\delta_{k+\ell}$ is only computed at the nodes of the 
coarser grid $\cG_\ell$.
The resulting evolution equation is: 
\[
\od{\hat{X}}{t} = F_k(\hat{X}) + \sum_{\ell=1}^L 
\cE[\delta_{k+\ell}(\hat{X}(\cdot)); \Delta_\ell](t),
\]
where $\Delta_\ell \subset \cG_\ell$ is a stencil of $m_\ell+1$ recent 
data points on the grid $\cG_\ell$.

\subsection{Complexity analysis}\label{sec:grid_compl}
The computational cost of the MGT-HMM algorithm is dominated by the 
evaluations of the vector fields.
Let $\cC_j \approx \cC_0 2^j$ be the cost of evaluating $F_j(x)$.
The time grids are chosen to be geometric, $\tau_\ell = \tau_0 P^\ell$ for 
some integer $P > 1$.
The base model is $F_k$, and there are $L$ correction levels.
The total simulation time is $T$.
According to \eqref{eq:numder}, to evaluate $F_{k+\ell}(X)$, one needs to 
evaluate $F_{k+\ell-1}(X)$ first.
The most expensive vector field $F_{k+L}$, which is our aim to 
approximate, is evaluated only on the coarsest grid $\cG_L$.
For $\ell = 0, 1, \ldots, L -1$, the vector field $F_{k+\ell}$ needs to be 
evaluated on the grid $\cG_\ell \setminus \cG_{\ell+1}$.
The total computational cost is the sum of these contributions:
\[
\Cost_{\HMM_k^L} \approx |\cG_L| \cdot \cC_{k+L} + 
\sum_{\ell=0}^{L-1} |\cG_\ell \setminus \cG_{\ell+1}| \cdot \cC_{k+\ell}
\approx \frac{T}{\tau_L} \cC_{k+L} + \sum_{\ell=0}^{L-1} 
\left( \frac{T}{\tau_{\ell}} - \frac{T}{\tau_{\ell+1}} \right) 
\cC_{k+\ell}.
\]
Let $\Cost_{\HMM_k} \approx (T / \tau_0) \cC_k$. 
Normalizing this cost and substituting $\cC_{k+\ell} = \cC_k 2^\ell$ and 
$\tau_\ell = \tau_0 P^\ell$, one gets that 
\[
\Cost_{\HMM_k^L} \approx \Cost_{\HMM_k} \cdot 
\left( \left( \frac2P \right)^L + \sum_{\ell=0}^{L-1} 
\left( \frac2P \right)^\ell \cdot \left( 1 - \frac1P \right) \right).
\]
Evaluating the geometric series for two cases yields:
\[
\frac{\Cost_{\HMM_k^L} }{\Cost_{\HMM_k}} \approx
\begin{cases}
1+\frac{L}2, & P = 2, \\
\frac{P-1}{P-2} - \frac{1}{P-2} \left( \frac2P \right)^L, & P \ge 3.
\end{cases}
\]
This should be compared with the cost of a naive $(k+L)$-th order method,
which is $2^L$ times the cost of the base $k$-th order model:
\[
\Cost_{\HMM_{k+L}} \approx 2^L \cdot 
\Cost_{\HMM_k}.
\]
If the grid refinement factor $P > 2$ is chosen, the amplification of the 
computational cost from $\HMM_k$ to $\HMM_k^L$ is bounded by an $\cO(1)$ 
constant $\frac{P-1}{P-2}$.
Therefore, the total cost of $\HMM_k^L$ is of the same order as the cost 
of the base $k$-th order method $\HMM_k$.
This analysis demonstrates a dramatic efficiency gain over the naive 
approach.

\section{A priori error analysis of the MGT-HMM scheme}
\label{sec:analysis} 

This section provides a rigorous a priori error analysis for the proposed 
MGT-HMM algorithm, culminating in a practical guide for balancing the 
different error sources to achieve optimal efficiency.

\subsection{Properties of polynomial extrapolation operator}
\label{sec:analysis_extrap}
The core of the MGT-HMM algorithm is the use of polynomial extrapolation 
to approximate expensive high-order correction terms on a fine time grid 
using values computed only on a coarse grid. 
We begin by formalizing the properties of the extrapolation operator that 
are essential for the error analysis. 

Let $\Delta \subset \bR$ be a non-empty finite set of points, 
and let $h$ be a function defined on $\Delta$.
The nodal polynomial associated with $\Delta$ is defined as 
$\phi_\Delta(t) = \prod_{t' \in \Delta} (t - t')$.
The extrapolation operator of the function $h$ on $\Delta$ via a Lagrange 
polynomial is denoted by $\cE[h; \Delta]$, i.e.,
\begin{equation}\label{eq:extrap}
\cE[h; \Delta](t) := \sum_{t' \in \Delta} h(t') 
\frac{\phi_{\Delta \setminus \{ t' \}}(t)}
{\phi_{\Delta \setminus \{ t' \}}(t')}.
\end{equation}
The following lemma provides standard error bounds that are essential for 
the analysis.

\begin{lemma}\label{lemma:extrap}
Suppose that $\Delta \cup \{t\}$ is contained in a bounded closed interval 
$I \subset \bR$, and the function $h$ is defined on $I$. 
The following estimates hold:
\begin{enumerate}
\item If $h \in C^{m}(I)$, where $m = |\Delta|$ is the size of the set 
$\Delta$, then 
\[
|h(t) - \cE[h; \Delta](t)| \le \frac{1}{m!} |\phi_\Delta(t)| 
\max_{t' \in I} | h^{(m)}(t') |.
\]
\item The extrapolation operator $\cE$ is $L^\infty$-stable in the 
following sense: if $h \in L^\infty(I)$, then 
\[
\| \cE[h; \Delta] \|_{L^\infty(I)} \le \Lambda_I(\Delta) \cdot 
\|h\|_{L^\infty(I)},
\]
where $\Lambda_I(\Delta) = \sup_{t \in I} \sum_{t' \in \Delta} 
\left| \frac{\phi_{\Delta \setminus \{ t' \}}(t)}
{\phi_{\Delta \setminus \{ t' \}}(t')} \right|$ is the Lebesgue constant
for the stencil $\Delta$ on the interval $I$.
\item If $h$ is Lipschitz continuous on $I$, then 
\[
|h(t) - \cE[h; \Delta](t)| \le \| h \|_{Lip} \cdot 
|\phi_\Delta(t)| \cdot \sum_{t' \in \Delta} 
\frac{1}{|\phi_{\Delta \setminus \{ t' \}}(t')|}.
\]
\end{enumerate}
\end{lemma}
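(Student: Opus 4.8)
The plan is to treat the three estimates in turn, observing that $\cE[\cdot; \Delta]$ is precisely Lagrange interpolation through the nodes $\Delta$, with cardinal functions $L_{t'}(t) = \phi_{\Delta \setminus \{t'\}}(t)/\phi_{\Delta \setminus \{t'\}}(t')$. Two elementary facts will be used repeatedly: that $\cE$ reproduces polynomials of degree $< m$ (in particular $\sum_{t' \in \Delta} L_{t'}(t) \equiv 1$, by exactness on the constant function), and the factorization $\phi_{\Delta \setminus \{t'\}}(t) = \phi_\Delta(t)/(t - t')$, obtained by pulling the factor $(t - t')$ out of the nodal polynomial.

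For the first estimate I would use the classical Rolle-theorem argument. Assuming $t \notin \Delta$ (the case $t \in \Delta$ being trivial, since both sides vanish), I introduce the auxiliary function
\[
\psi(s) = h(s) - \cE[h; \Delta](s) - \frac{h(t) - \cE[h;\Delta](t)}{\phi_\Delta(t)} \phi_\Delta(s),
\]
which vanishes at the $m$ nodes of $\Delta$ and at $s = t$, hence at $m+1$ points of $I$. Repeated application of Rolle's theorem yields a point $\xi \in I$ with $\psi^{(m)}(\xi) = 0$. Since $\cE[h;\Delta]$ has degree $\le m-1$ and $\phi_\Delta$ is monic of degree $m$ (so $\phi_\Delta^{(m)} = m!$), this identity rearranges to $h(t) - \cE[h;\Delta](t) = \phi_\Delta(t)\, h^{(m)}(\xi)/m!$; bounding $|h^{(m)}(\xi)|$ by its maximum over $I$ gives the claim. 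The second estimate is immediate: applying the triangle inequality directly to the definition \eqref{eq:extrap} and factoring out $\|h\|_{L^\infty(I)}$ leaves exactly the sum defining $\Lambda_I(\Delta)$ once the supremum over $t \in I$ is taken.

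The only step requiring a small idea is the third estimate, since a merely Lipschitz $h$ need not possess the derivatives used in part (1). Here I would exploit constant reproduction: writing $1 = \sum_{t' \in \Delta} L_{t'}(t)$ lets me express the error as
\[
h(t) - \cE[h; \Delta](t) = \sum_{t' \in \Delta} \big(h(t) - h(t')\big) L_{t'}(t).
\]
Applying the Lipschitz bound $|h(t) - h(t')| \le \|h\|_{Lip}\, |t - t'|$ together with the factorization $L_{t'}(t) = \phi_\Delta(t)/[(t-t')\,\phi_{\Delta \setminus \{t'\}}(t')]$ causes the factor $|t - t'|$ to cancel exactly, leaving $\|h\|_{Lip}\, |\phi_\Delta(t)| \sum_{t' \in \Delta} 1/|\phi_{\Delta \setminus \{t'\}}(t')|$, as stated. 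I expect the main (though modest) obstacle to be bookkeeping this cancellation cleanly and verifying the two auxiliary facts—constant reproduction and the nodal-polynomial factorization—rather than any genuine difficulty, since all three bounds are classical interpolation estimates.
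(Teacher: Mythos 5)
Your proposal is correct and follows essentially the same route as the paper: the paper likewise dismisses parts (1) and (2) as standard (your Rolle-theorem and triangle-inequality arguments are the intended ones) and proves part (3) exactly as you do, via exactness on constants and the cancellation of $(t-t')$ through the factorization $\phi_{\Delta\setminus\{t'\}}(t)\,(t-t') = \phi_\Delta(t)$.
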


\begin{proof}
The first and second statements are standard results from elementary 
mathematical analysis.
For the third statement, we use the fact that the extrapolation operator 
is exact for constant functions, i.e., $\cE[1; \Delta] \equiv 1$. 
This implies that 
\begin{multline*}
|h(t) - \cE[h; \Delta](t)| = 
\left| \sum_{t' \in \Delta} (h(t) - h(t') ) 
\frac{\phi_{\Delta \setminus \{ t' \}}(t)}
{\phi_{\Delta \setminus \{ t' \}}(t')} \right| \\ \le 
\| h \|_{Lip} \sum_{t' \in \Delta} 
\left| \frac{\phi_{\Delta \setminus \{t'\}}(t) \cdot (t - t')}
{\phi_{\Delta \setminus \{t'\}}(t')} \right| \le \| h \|_{Lip} \cdot 
|\phi_\Delta(t)| \cdot \sum_{t' \in \Delta} 
\frac{1}{|\phi_{\Delta \setminus \{ t' \}}(t')|},
\end{multline*}
which is the desired result.
\end{proof}

\begin{remark}
In practical implementations, the extrapolation stencil typically consists 
of $m+1$ equispaced points.
Let $\Delta = \{t_0, t_1, \ldots, t_m\}$, where 
$t_k = t_0 + k \tau$ for some step size $\tau > 0$.
The size of $\Delta$ is $|\Delta| = m+1$.
For $t \in [t_m, t_{m+1})$, one has that 
\[
|\phi_\Delta(t)| = \prod_{k=0}^m |t - t_k| \le \prod_{k=0}^m 
|t_{m+1} - t_k| = (m+1)! \cdot \tau^{m+1},
\]
which yields that 
\begin{equation}\label{eq:equi_smooth}
|h(t) - \cE[h; \Delta](t)| \le \tau^{m+1} \max_{t' \in I} 
| h^{(m+1)}(t') |.
\end{equation}
In addition, it is known that the Lebesgue constant is scaling invariant, 
and it grows exponentially with respect to $m$ for equispaced points.
With Lipschitz continuity, one can obtain that 
\[
\frac{1}{|\phi_{\Delta \setminus \{ t' \}}(t')|} = 
\frac{1}{\tau^m} \sum_{k = 0}^m 
\prod_{j = 0 \atop j \ne k}^m \frac{1}{|j - k|} = \frac{1}{\tau^m} 
\sum_{k=0}^m \frac{1}{k! (m - k)!} = 
\frac{2^m}{m! \cdot \tau^m}.
\]
At this time, there is a better estimate:
\begin{equation}\label{eq:equi_Lip}
|h(t) - \cE[h; \Delta](t)| \le 2^m (m+1) \tau \cdot \|h\|_{Lip}.
\end{equation}
Here the exponential growth of the factor $\tilde{\Lambda}_m = 2^m (m+1)$ 
is similar to the behavior of the Lebesgue constant $\Lambda_m$, which 
motivates the use of low-to-moderate extrapolation orders in practice.
Compared to the $L^\infty$ case, the estimate in the Lipschitz case has an 
additional $\tau$ given by the scaling argument, which will bring us some 
benefits in the subsequent error estimate.
\end{remark}

\subsection{The two-grid method}
To build intuition, the error for the two-grid method is first analyzed. 
The error between the exact high-order solution $X(t)$ and the two-grid 
approximation $\hat{X}(t)$ is bounded in the following theorem.

\begin{theorem}\label{thm:twogrid}
Let $X(t)$ be the solution to \eqref{eq:twogrid-high} and $\hat{X}(t)$ be 
the solution to \eqref{eq:twogrid} with initial treatment in 
\eqref{eq:twogrid-initial}. 
Assume that the vector field $F$ is $C^{m+1}$ and satisfies that 
$\sinn{F(x) - F(\tx), x - \tx} \le \lambda |x - \tx|^2$. 
Let $T$ be the maximal stability time of \eqref{eq:twogrid-high} 
determined by $\lambda$.
Let $\delta = F - \hat{F}$ and assume that 
$\| \delta \|_{C^{m+1}} \le \eta$.
Assume that $\eta$ and $\tau$ are sufficiently small.
Then there exists a constant $C > 0$ independent of $\eta, \tau, m, t$ 
such that for $t \in [t_m, \min \{ T, \eta^{-1} \}]$, 
the error is bounded by 
\[
|X(t) - \hat{X}(t)| \le C (\eta \tau^{m+1} + \tilde{\Lambda}_m 
\eta^2 \tau) \cdot \zeta(t - t_m),
\]
where $\zeta(t) = \int_0^t e^{\lambda s} \,\rd s$ and 
$\tilde{\Lambda}_m = 2^m (m+1)$.
\end{theorem}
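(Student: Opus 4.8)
\section*{Proof proposal}

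The plan is to derive a differential inequality for the error $e(t) = X(t) - \hat{X}(t)$ and close it by a \Gronwall-type argument. Subtracting \eqref{eq:twogrid} from \eqref{eq:twogrid-high} and using the splitting $F = \hat{F} + \delta$, I would first expose the stability structure. Writing $\psi(s) := \delta(\hat{X}(s))$ and noting $F(\hat{X}) - \hat{F}(\hat{X}) = \delta(\hat{X}) = \psi$, the error equation becomes
\[
e'(t) = \bigl(F(X) - F(\hat{X})\bigr) + \bigl(\psi(t) - \cE[\psi; \Delta](t)\bigr).
\]
Testing against $e$ and invoking the one-sided Lipschitz hypothesis $\sinn{F(x) - F(\tx), x - \tx} \le \lambda |x - \tx|^2$ gives
\[
\frac12 \od{}{t} |e|^2 \le \lambda |e|^2 + |e| \cdot \bigl| \psi(t) - \cE[\psi; \Delta](t) \bigr|.
\]
Everything then reduces to bounding the pointwise extrapolation error of $\psi$ along the computed trajectory.

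The central difficulty is that $\hat{X}$ is only piecewise smooth in time: the extrapolation stencil $\Delta$ switches at each coarse node, so the right-hand side of \eqref{eq:twogrid} has jumps in its time-derivatives across coarse nodes and $\psi = \delta \circ \hat{X}$ is merely Lipschitz, not globally $C^{m+1}$. Hence the smooth bound \eqref{eq:equi_smooth} cannot be applied to $\psi$ directly. The key idea is to split $\psi$ against the exact trajectory, $\psi = \tilde{\psi} + r$ with $\tilde{\psi}(s) := \delta(X(s))$ and $r := \psi - \tilde{\psi} = \delta(\hat{X}) - \delta(X)$, and to estimate the two contributions by the two distinct bounds of \Cref{lemma:extrap}. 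By linearity of $\cE$, the extrapolation error separates as $(\tilde{\psi}(t) - \cE[\tilde{\psi}; \Delta](t)) + (r(t) - \cE[r; \Delta](t))$.

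For the smooth part, $\tilde{\psi} = \delta \circ X$ inherits the full regularity of the exact solution; an a priori bound on the time-derivatives of $X$ up to order $m+1$ (furnished by $F \in C^{m+1}$ and boundedness of the trajectory) together with $\| \delta \|_{C^{m+1}} \le \eta$ gives $\| \tilde{\psi}^{(m+1)} \|_\infty \lesssim \eta$, so \eqref{eq:equi_smooth} yields the first term, $|\tilde{\psi}(t) - \cE[\tilde{\psi}; \Delta](t)| \lesssim \eta \tau^{m+1}$. For the rough part, I would apply the Lipschitz bound \eqref{eq:equi_Lip}, contributing $\tilde{\Lambda}_m \tau \| r \|_{Lip}$. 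Differentiating and grouping as $r' = (\nabla\delta(\hat{X}) - \nabla\delta(X)) \hat{X}' + \nabla\delta(X)\, e'$, one sees each summand carries an extra small factor: the first is $O(\eta |e|)$ via $\| \nabla^2 \delta \| \le \eta$, and the second is $O(\eta)$ times $|e'|$. Crucially, because $t$ lies within one coarse step beyond the stencil, the magnitude of the correction is controlled without a Lebesgue-constant blow-up, $|\cE[\psi; \Delta](t)| \le |\psi(t)| + (\text{small}) = O(\eta)$, whence $|e'| \lesssim |e| + \eta$ and $\| r \|_{Lip} \lesssim \eta |e| + \eta^2$.

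Combining these, the energy estimate becomes $\frac12 \od{}{t} |e|^2 \le (\lambda + C \tilde{\Lambda}_m \tau \eta) |e|^2 + |e| (C \eta \tau^{m+1} + C \tilde{\Lambda}_m \eta^2 \tau)$. Since $e(t_m) = 0$ by the exact initial treatment \eqref{eq:twogrid-initial}, and since for $\tau, \eta$ sufficiently small the perturbation $C \tilde{\Lambda}_m \tau \eta$ of $\lambda$ contributes only a bounded factor over the stability window $t \le \min\{ T, \eta^{-1} \}$, a \Gronwall\ argument with growth factor $\zeta(t - t_m) = \int_0^{t - t_m} e^{\lambda s} \rd s$ closes the estimate and delivers the stated bound. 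The main obstacle I anticipate is the rigorous control of $\| r \|_{Lip}$: the bound $\| r \|_{Lip} \lesssim \eta |e| + \eta^2$ is mildly circular, since $|e'|$ depends on the very extrapolation term being estimated, and making the $O(\eta)$ magnitude bound on $\cE[\psi; \Delta]$ precise requires exploiting that evaluation occurs within one coarse step of the stencil so that no Lebesgue constant enters the magnitude. A secondary technical point is justifying the uniform a priori derivative bounds on $X$, and on $\hat{X}'$ across each coarse subinterval, needed for $\tilde{\psi}$ and for the estimate of $r'$.
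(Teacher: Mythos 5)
Your proposal follows essentially the same route as the paper's proof: the same energy estimate with the one-sided Lipschitz constant, the same decomposition of the extrapolation error into a smooth part along the exact trajectory (bounded by $\eta\tau^{m+1}$ via the $C^{m+1}$ bound) and a rough part $\delta(\hat X)-\delta(X)$ (bounded via the Lipschitz estimate of \Cref{lemma:extrap}, giving $\tilde\Lambda_m\tau(\eta|e|+\eta^2)$), closed by a two-stage \Gronwall\ argument in which the condition $t\lesssim\eta^{-1}$ keeps the extra exponential factor bounded. The technical points you flag (the $\lesssim\eta$ bound on $\cE[\delta(\hat X(\cdot));\Delta]$ via stability of the operator, and the a priori derivative bounds on $X$ from \cite[Lemma C.1]{Jin2022HighOrder}) are resolved in the paper exactly as you anticipate.
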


\begin{proof}
For fixed $n \ge m$ and $t \in [t_n, t_{n+1})$, the approximation error 
can be estimated by differentiating $|X - \hat{X}|^2 / 2$ as follows: 
\begin{multline*}
\frac12 \od{}{t} |{X - \hat{X}}|^2 = 
\sinn{F(X) - \hat{F}(\hat{X}) - 
\cE[F(\hat{X}(\cdot)) - \hat{F}(\hat{X}(\cdot)); \Delta], X - \hat{X}} \\ 
= \sinn{F(X) - F(\hat{X}), X - \hat{X}} + \sinn{\delta(\hat{X}) - 
\cE[\delta(\hat{X}(\cdot)); \Delta], X - \hat{X}} \\ 
\le \lambda \|{X - \hat{X}}\|^2 + \sinn{\delta(X) - 
\cE[\delta({X}(\cdot)); \Delta], X - \hat{X}} \\ 
+ \sinn{\delta(\hat{X}) - \delta(X) - 
\cE[\delta(\hat{X}(\cdot)) - \delta(X(\cdot)); \Delta], X - \hat{X}}.
\end{multline*}
The $C^{m+1}$ regularity of the vector field $F$ implies that the solution 
$X(t)$ has bounded derivatives up to order $m+2$, according to 
\cite[Lemma C.1 (2)]{Jin2022HighOrder}, yielding that 
$\| \delta \circ X \|_{C^{m+1}} \lesssim \eta$.
Applying \Cref{lemma:extrap}, one can bound the second term by 
\[
| \delta(X) - \cE[\delta({X}(\cdot)); \Delta] | \lesssim \eta \tau^{m+1}.
\]
In addition, one notes that 
\begin{gather*}
|\delta(\hat{X}(t)) - \delta(X(t))| + 
|\nabla \delta(\hat{X}(t)) - \nabla \delta(X(t))| 
\lesssim \eta |X(t) - \hat{X}(t)|, \\
\left|\nabla \delta(\hat{X}) \od{\hat{X}}{t} - 
\nabla \delta(X) \od{X}{t} \right| \le 
|\nabla \delta(\hat{X})| \cdot \left| \od{\hat{X}}{t} - \od{X}{t} \right| 
+ |\nabla \delta(\hat{X}) - \nabla \delta(X)| \cdot \left| \od{X}{t} 
\right|,
\end{gather*}
and 
\begin{multline*}
\left| \od{\hat{X}}{t} - \od{X}{t} \right| = 
|F(X) - \hat{F}(\hat{X}) - \cE[\delta(\hat{X}(\cdot)); \Delta]| \\ 
\le |F(X) - F(\hat{X})| + |F(\hat{X}) - \hat{F}(\hat{X})| 
+ |\cE[\delta(\hat{X}(\cdot)); \Delta]| 
\lesssim \eta + |X(t) - \hat{X}(t)|.
\end{multline*}
Therefore, 
\[
\|\delta(\hat{X}) - \delta(X)\|_{Lip} \lesssim \eta 
\max_{t' \in [t_{n-m}, t]} |X(t') - \hat{X}(t')| + \eta^2.
\]
The last term is then bounded using the stability property of the 
extrapolation operator, yielding a bound as follows:
\[
|\delta(\hat{X}) - \delta(X) - 
\cE[\delta(\hat{X}(\cdot)) - \delta(X(\cdot)); \Delta]| 
\lesssim \tilde{\Lambda}_m \eta \tau \max_{t' \in [t_{n-m}, t]} 
|X(t') - \hat{X}(t')| + \tilde{\Lambda}_m \eta^2 \tau.
\]
Combining these estimates, one can obtain the following inequality:
\begin{multline*}
\frac12 \od{}{t} |X - \hat{X}|^2 \\ \le \lambda |X - \hat{X}|^2 + 
C |X - \hat{X}| \cdot \left( \eta \tau^{m+1} + \tilde{\Lambda}_m \eta \tau 
\max_{t' \in [0, t]} |X(t') - \hat{X}(t')| + 
\tilde{\Lambda}_m \eta^2 \tau \right).
\end{multline*}
Therefore, by \Gronwall's inequality, one has 
\begin{multline*}
|X(t) - \hat{X}(t)| \le C \int_{t_m}^t e^{\lambda (t - s)} 
\left( \eta \tau^{m+1} + \tilde{\Lambda}_m \eta \tau \max_{t' \in [0, s]} 
|X(t') - \hat{X}(t')| + \tilde{\Lambda}_m \eta^2 \tau \right) \,\rd s \\
= C \tilde{\Lambda}_m \eta \tau \cdot \int_{t_m}^t e^{\lambda (t-s)} \cdot 
\max_{t' \in [0, s]} |X(t') - \hat{X}(t')| \,\rd s + 
C \left( \eta \tau^{m+1} + \tilde{\Lambda}_m \eta^2 \tau \right) \cdot 
\zeta(t - t_m).
\end{multline*}
for each $t \ge t_m$.
Let us define $\xi(t) = \max_{t' \in [0, t]} |X(t') - \hat{X}(t')|$, and 
assume that the maximum is attained at the point $\tilde{t}$.
Since $t \le T$ and $\zeta(t)$ is non-decreasing, one can obtain that 
\begin{multline*}
\xi(t) = |X(\tilde{t}) - \hat{X}(\tilde{t})| 
\\ \le C \tilde{\Lambda}_m \eta \tau \cdot \int_{t_m}^{\tilde{t}} 
e^{\lambda (\tilde{t}-s)} \cdot \xi(s) \,\rd s + 
C \left( \eta \tau^{m+1} + \tilde{\Lambda}_m \eta^2 \tau \right) \cdot 
\zeta(\tilde{t} - t_m) \\
\lesssim \tilde{\Lambda}_m \eta \tau \cdot \int_{t_m}^{t} \xi(s) \,\rd s + 
\left( \eta \tau^{m+1} + \tilde{\Lambda}_m \eta^2 \tau \right) \cdot 
\zeta(t - t_m) 
\end{multline*}
A direct application of \Gronwall's inequality implies that 
\[
\xi(t) \le C \left( \eta \tau^{m+1} + \tilde{\Lambda}_m 
\eta^2 \tau \right) \cdot \zeta(t - t_m) \cdot \exp 
\left( C \tilde{\Lambda}_m \eta \tau (t - t_m) \right).
\]
Note that $t \lesssim \eta^{-1}$, which ensures that 
$\exp \left( C \tilde{\Lambda}_m \eta \tau (t - t_m) \right) \lesssim 1$.
This completes the proof.
\end{proof}

\begin{remark}
The error bound in \Cref{thm:twogrid} provides significant insight into 
the behavior of the two-grid method.
It can be dissected into several components:
\begin{enumerate}
\item The $\eta \tau^{m+1}$ term represents the intrinsic error of the 
polynomial approximation, arising from truncating the Taylor series of the 
correction term along the true trajectory.
\item The $\tilde{\Lambda}_m \eta^2 \tau$ term arises from the stability 
properties of the extrapolation operator applied to the difference between 
the correction evaluated on the true and approximate trajectories.
In practice, the discussions in \Cref{sec:analysis_balance} suggest 
$m \le q - 1$, where $q$ is the order of the macroscopic solver, 
implying that $\tilde{\Lambda}_m$ should not be too large.
In addition, when the base model is $\HMM_k$, one has that 
$\eta = \cO(\eps^{k+1})$, and $\tau^{m+1} \lesssim \eps$.
At this time, $\eta \tau^{m+1}$ should be the leading term of the error.
\item The error accumulation factor $\zeta(t - t_m)$ also appears in the 
long-time modeling error estimate in \Cref{thm:longtime_lambda}, 
reflecting how errors are propagated by the underlying dynamics.
Its behavior for different $\lambda$ can be found in 
\Cref{thm:longtime_lambda}.
\end{enumerate}
\end{remark}

\subsection{Error analysis of the full MGT-HMM scheme}

We now present the main result on the error estimate for the full 
$\HMM_k^L$ scheme, which approximates the solution $X_{k+L}$ of the 
$(k+L)$-th order correction model.

\begin{theorem}\label{thm:mgt}
Let $X_{k+L}(t)$ be the exact solution of the $(k+L)$-th order model, and 
let $\hat{X}(t)$ be the solution of the $\HMM_k^L$ scheme with 
extrapolation orders $m_\ell$ on grids $\cG_\ell$ with steps $\tau_\ell$ 
for $\ell = 1, 2, \ldots, L$.
Assume that the regularity index $K$ satisfies $K \ge k + \ell + m_\ell$ 
for each $\ell = 1, 2, \ldots, L$.
Let $T_\eps$ be the maximal stability time of $\HMM_{k+L}$ model 
determined by $\lambda_{k+L}(\eps)$.
Then there exists a constant $C > 0$ independent of $\eps, \tau_\ell, t$
such that for each $t \le \min \{ T_\eps, \eps^{-k-1} \}$, the 
error is bounded by 
\[
|X_{k+L}(t) - \hat{X}(t)| \le C
\sum_{\ell = 1}^L (\eps^{k+\ell} \tau_\ell^{m_\ell+1} + 
\eps^{2k+2\ell} \tau_\ell) \cdot \zeta_{k+L}(\eps, t).
\]
\end{theorem}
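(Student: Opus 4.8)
The plan is to reproduce the energy estimate of \Cref{thm:twogrid}, but to organize it around the telescoping identity $F_{k+L} = F_k + \sum_{\ell=1}^L \delta_{k+\ell}$ that defines the scheme, so that the accumulated extrapolation errors from all $L$ correction levels are handled simultaneously. Writing $e(t) = X_{k+L}(t) - \hat X(t)$ and differentiating $\tfrac12|e|^2$, I would add and subtract $F_{k+L}(\hat X)$; because the base field $F_k(\hat X)$ together with the extrapolated corrections telescopes back to $F_{k+L}(\hat X)$, the residual collapses to
\[
\frac12 \od{}{t}|e|^2 = \sinn{F_{k+L}(X_{k+L}) - F_{k+L}(\hat X), e} + \sum_{\ell=1}^L \sinn{\delta_{k+\ell}(\hat X) - \cE[\delta_{k+\ell}(\hat X(\cdot)); \Delta_\ell], e}.
\]
The first inner product is controlled by the one-sided Lipschitz constant $\lambda_{k+L}(\eps)$ of the target field, contributing $\lambda_{k+L}(\eps)|e|^2$ and producing the growth factor $\zeta_{k+L}(\eps, t)$ after integration; the remaining sum is a genuine extrapolation error at each level, treated exactly as in the two-grid case.

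The second step is the per-level estimate. Using \Cref{lemma:asym} and the chain rule (as in \Cref{sec:model_cond}), I would first record that $\delta_{k+\ell} = F_{k+\ell} - F_{k+\ell-1}$ obeys $\|\delta_{k+\ell}\|_{C^{m_\ell+1}} \lesssim \eps^{k+\ell}$; this is exactly where the regularity hypothesis $K \ge k+\ell+m_\ell$ is spent, since it guarantees enough derivatives of $\Gamma_{k+\ell} - \Gamma_{k+\ell-1}$. Abbreviating $\eta_\ell = \eps^{k+\ell}$ and splitting
\[
\delta_{k+\ell}(\hat X) - \cE[\delta_{k+\ell}(\hat X(\cdot)); \Delta_\ell] = \big(\delta_{k+\ell}(X) - \cE[\delta_{k+\ell}(X(\cdot)); \Delta_\ell]\big) + \big((\delta_{k+\ell}(\hat X) - \delta_{k+\ell}(X)) - \cE[\delta_{k+\ell}(\hat X(\cdot)) - \delta_{k+\ell}(X(\cdot)); \Delta_\ell]\big),
\]
the first group is an extrapolation error along the smooth exact trajectory $X_{k+L}$ and is bounded by $\eps^{k+\ell}\tau_\ell^{m_\ell+1}$ through \Cref{lemma:extrap}(1); the second group is bounded by the Lipschitz and $L^\infty$-stability estimates of \Cref{lemma:extrap}, giving $\tilde{\Lambda}_{m_\ell}\tau_\ell(\eta_\ell\,\xi(t) + \eta_\ell^2)$, where $\xi(t) = \max_{[0,t]}|e|$ and the quadratic term reproduces the $\eta^2\tau$ mechanism of \Cref{thm:twogrid}, arising from bounding $|\od{}{t}(\hat X - X)|$ by $|e|$ plus a correction of size $\eta_1$.

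Summing over $\ell$ and dividing by $|e|$ then yields
\[
\od{}{t}|e| \le \lambda_{k+L}(\eps)|e| + C\sum_{\ell=1}^L \big(\eps^{k+\ell}\tau_\ell^{m_\ell+1} + \eps^{2k+2\ell}\tau_\ell\big) + C\Big(\sum_{\ell=1}^L \tilde{\Lambda}_{m_\ell}\tau_\ell\eps^{k+\ell}\Big)\xi(t),
\]
where the fixed Lebesgue factors $\tilde{\Lambda}_{m_\ell}$ are absorbed into $C$ and the cross-level products $\eta_\ell\eta_1$ are, thanks to the geometric grid ratio $\eps P < 1$, dominated by the $\ell=1$ entry $\eps^{2k+2}\tau_1$ of the displayed sum. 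Integrating with factor $e^{\lambda_{k+L}(\eps)t}$ and then running the $\max$-bootstrap of \Cref{thm:twogrid} on $\xi$ delivers the asserted estimate.

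The main obstacle — the one genuinely new point beyond repackaging the two-grid argument — is closing this last \Gronwall{} loop over the long horizon. The $\xi$-feedback carries the small coefficient $\sum_\ell \tilde{\Lambda}_{m_\ell}\tau_\ell\eps^{k+\ell}$, whose leading term is $\tilde{\Lambda}_{m_1}\tau_1\eps^{k+1}$; the restriction $t \le \min\{T_\eps, \eps^{-k-1}\}$ is precisely what makes the product of this coefficient with $t$ an $\cO(1)$ quantity, so the amplification factor $\exp\big(C\sum_\ell\tilde{\Lambda}_{m_\ell}\tau_\ell\eps^{k+\ell}\,t\big)$ stays bounded and is absorbed into $C$. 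I would finally account for the per-grid initial seeding intervals needed to launch each extrapolation (cf. \Cref{rmk:initial_extrap}) by starting the estimate from the largest seeding time; since that time is $\cO(1)$ against the $\eps^{-k-1}$ scale, it leaves the rate in $\zeta_{k+L}(\eps,t)$ unchanged.
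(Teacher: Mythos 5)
Your proposal is correct and follows essentially the same route as the paper: the same energy estimate on $|X_{k+L}-\hat X|^2/2$ with the one-sided Lipschitz constant $\lambda_{k+L}(\eps)$, the same per-level split of the extrapolation residual into an exact-trajectory part ($\eps^{k+\ell}\tau_\ell^{m_\ell+1}$) and a Lipschitz-stability part, and the same \Gronwall{} bootstrap on $\xi$ closed by the restriction $t\le\eps^{-k-1}$. Your explicit handling of the cross-level quadratic terms $\eta_\ell\eta_1$ via the condition $P\eps<1$ is a welcome refinement of a point the paper's proof passes over silently, but it does not change the argument's structure.
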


\begin{proof}
After the initial treatment, the approximation error can be estimated 
by differentiating $|X_{k+L} - \hat{X}|^2 / 2$ as follows:
\begin{multline*}
\frac12 \od{}{t} |X_{k+L} - \hat{X}|^2 = 
\ip{F_{k+L}(X_{k+L}) - F_k(\hat{X}) - 
\sum_{\ell=1}^L \cE[\delta_{k+\ell}(\hat{X}(\cdot)); \Delta_\ell]}
{X_{k+L} - \hat{X}} \\
\le \lambda_{k+L}(\eps) \cdot |X_{k+L} - \hat{X}|^2 + 
\sum_{\ell=1}^L |{\delta_{k+\ell}(\hat{X}) - 
\cE[\delta_{k+\ell}(\hat{X}(\cdot)); \Delta_\ell]}| \cdot 
|{X_{k+L} - \hat{X}}|.
\end{multline*}
Each term $|{\delta_{k+\ell}(\hat{X}) - 
\cE[\delta_{k+\ell}(\hat{X}(\cdot)); \Delta_\ell]}|$ is bounded as in the 
proof of \Cref{thm:twogrid}.
Note that 
\begin{multline*}
\left| \delta_{k+\ell}(\hat{X}) - 
\cE[\delta_{k+\ell}(\hat{X}(\cdot)); \Delta_\ell] \right|
\le \left| \delta_{k+\ell}(X_{k+L}) - 
\cE[\delta_{k+\ell}(X_{k+L}(\cdot)); \Delta_\ell] \right| \\ + 
\left| \delta_{k+\ell}(X_{k+L}) - 
\delta_{k+\ell}(\hat{X}) - 
\cE[\delta_{k+\ell}(X_{k+L}(\cdot)) - \delta_{k+\ell}(\hat{X}(\cdot)); 
\Delta_\ell] \right|.
\end{multline*}
The first term can be bounded by the following estimate:
\[
\left| \delta_{k+\ell}(X_{k+L}) - 
\cE[\delta_{k+\ell}(X_{k+L}(\cdot)); \Delta_\ell] \right| \lesssim
\eps^{k+\ell} \tau_\ell^{m_\ell + 1},
\]
and the second term can be bounded by 
\begin{multline*}
\left| \delta_{k+\ell}(X_{k+L}) - 
\delta_{k+\ell}(\hat{X}) - 
\cE[\delta_{k+\ell}(X_{k+L}(\cdot)) - \delta_{k+\ell}(\hat{X}(\cdot)); 
\Delta_\ell] \right| \\ \lesssim
\eps^{k+\ell} \tau_\ell \cdot \xi(t) + 
\eps^{2k+2\ell} \tau_\ell,
\end{multline*}
where $\xi(t) = \max_{t' \in [0,t]} |X_{k+L}(t') - \hat{X}(t')|$.
Summing these contributions and applying \Gronwall's inequality, one can 
obtain similarly to the proof of \Cref{thm:twogrid} that 
\[
\xi(t) \lesssim \sum_{\ell = 1}^L \left( 
\eps^{k+\ell} \tau_\ell^{m_\ell+1} + \eps^{2k+2\ell} \tau_\ell \right)
\cdot \zeta_{k+L}(\eps, t) \cdot 
\exp \left( C \sum_{\ell=1}^L \eps^{k+\ell}
\tau_\ell t \right).
\]
Applying the condition that $t \le \eps^{-k-1}$ yields the desired result.
\end{proof}

\subsection{A practical guide to parameter selection}
\label{sec:analysis_balance}

The complexity and error analyses provide a practical guide for optimizing 
the MGT-HMM algorithm.
Let us use a uniform extrapolation order, $m_\ell = m$, and a geometric 
grid hierarchy, $\tau_\ell = P^\ell \tau_0$,
for each $\ell = 1, 2, \ldots, L$.
To achieve a target accuracy of $\cO(\eps^{k+L+1})$ of $\HMM_k^L$, three 
primary sources of error must be balanced.

\begin{enumerate}
    \item Modeling error: The underlying $(k+L)$-th order model has 
    modeling error of order $\cO(\eps^{k+L+1})$ as established in 
    \Cref{thm:longtime_lambda}.
    This sets the target accuracy.
    \item Discretization error: The $q$-th order macroscopic ODE solver 
    introduces a discretization error of $\cO(\tau_0^q)$.
    To match the target accuracy, we require that 
    $\tau_0^q \sim \eps^{k+L+1}$.
    \item Extrapolation error: For each correction level 
    $\ell = 1, 2, \ldots, L$, the extrapolation error from \Cref{thm:mgt}
    should also be consistent with the target accuracy:
    \[
    \eps^{k+\ell} \tau_\ell^{m_\ell+1} + \eps^{2k+2\ell} \tau_\ell 
    \lesssim \eps^{k+L+1}.
    \]
    Balancing the first error term, we set $P \sim \eps^{- \frac{1}{m+1}}$ 
    and $\tau_0 \lesssim \eps^{\frac{L+1}{m+1}}$.
    Substituting these into the discretization error condition gives a 
    consistency requirement on the orders: 
    $m + 1 \sim \frac{q (L+1)}{k+L+1}$, 
    which for reasonable choices implies $m \le q - 1$.
    Balancing the second term yields a requirement that 
    $k \ge \frac{m}{m+1} L - 1$.
\end{enumerate}

In summary, to achieve $\cO(\eps^{k+L+1})$ modeling error with optimal 
computational cost, one needs:
\[
m_\ell = m, \quad \tau_\ell \lesssim \eps^{\frac{L+1-\ell}{m+1}}, \quad 
m + 1 \sim \frac{q (L+1)}{k+L+1}, \quad k \ge \frac{m}{m+1} L - 1.
\]

Now we compare the optimal computational cost between $\HMM_k^L$ and 
naive $\HMM_{k+L}$ to achieve the same modeling error $\cO(\eps^{k+L+1})$.
For the naive $\HMM_{k+L}$, balancing the modeling error and 
discretization error requires $\tau_0 \sim \eps^{\frac{k+L+1}{q}}$. 
The computational cost is 
$\Cost_{\HMM_{k+L}} \approx \frac{T \cC_{k+L}}{\tau_0}$.
For the MGT scheme $\HMM_k^L$ with $P \sim \eps^{-1/(m+1)}$, the 
computational cost is given by the analysis in \Cref{sec:grid_compl}:
\[
\Cost_{\HMM_k^L} \approx \frac{T \cC_k}{\tau_0} \cdot 
\left( \frac{P-1}{P-2} - \frac{1}{P-2} \left(\frac{2}{P}\right)^L \right).
\]
The ratio of costs is therefore
\begin{multline*}
\frac{\Cost_{\HMM_k^L}}{\Cost_{\HMM_{k+L}}} \approx 
\frac{P-1}{P-2} \cdot 2^{-L} - \frac{1}{P-2} \cdot P^{-L} \sim 
\frac{1 - \eps^{\frac{1}{m+1}}}{1 - 2 \eps^{\frac{1}{m+1}}} \cdot 2^{-L} - 
\frac{\eps^{\frac{L+1}{m+1}}}{1 - 2 \eps^{\frac{1}{m+1}}}  \\
\lesssim \left( 1+2\eps^{\frac{1}{m+1}} \right) \cdot 2^{-L} \approx 
\left( 1+2\eps^{\frac{1}{m+1}} \right) \cdot
\frac{\Cost_{\HMM_k}}{\Cost_{\HMM_{k+L}}},
\end{multline*}
as long as $\eps$ is sufficiently small. 
This can be rewritten as
\[
\Cost_{\HMM_k^L} \approx (1 + 2 \eps^{\frac{1}{m+1}}) \cdot \Cost_{\HMM_k}.
\]
This analysis demonstrates that the MGT-HMM scheme can achieve the 
accuracy of a $(k+L)$-th order model at a computational cost that is 
approximately $2^L$ times smaller than the naive approach, with only a 
marginal overhead of order $\cO(\eps^{\frac{1}{m+1}})$ compared to the 
base $k$-th order method.
This effectively decouples the pursuit of high accuracy from prohibitive 
computational expense.

\section{Numerical Experiments}
\label{sec:num}

This section presents a series of numerical experiments to provide 
definitive validation of the theoretical claims of accuracy and efficiency 
for the proposed MGT-HMM algorithm.
The setup follows that described in \cite{Jin2022HighOrder}: a standard 
4th-order Runge-Kutta scheme ($q = 4$) is used for the macroscopic solver 
with step size $\Delta t$ and for the coupled solver with step size 
$\Delta t_c$ in the initial layer.
Accuracy is quantified through the $\ell^2$-norm error of the slow variables 
at the final time $T$.

\subsection{Efficiency and accuracy}
The first experiment provides a direct demonstration of the MGT-HMM's 
efficiency gain over naive high-order methods without sacrificing 
accuracy. 
Consider the cubic Chua model \cite{Chua1986The} with $\eps = 0.02$: 
\[
\left\{
\begin{aligned} 
& \od{x_1}{t} = -x_2, \\
& \od{x_2}{t} = x_1 + a x_2 - by, \\
& \od{y}{t} = \frac{1}{\eps}(x_2 - c_3 y^3 - c_2 y^2 - c_1 y), 
\end{aligned}
\right.
\]
with $x_0 = (0.2, 0)$, $y_0 = 0$, $a = 0.1$, $b = 0.7$, 
$c_1 = 11$, $c_2 = \tfrac{41}{2}$, $c_3 = \tfrac{44}{3}$.
The MGT parameters are chosen as $P = 5$ and $m = 3$.
Other parameters are $\Delta t = 0.02$, $\Delta t_c = 10^{-4}$, 
$\eta = 10^{-4}$.
The micro-solver is a forward Euler ODE solver with time step size 
$\delta t = 0.1 \eps$ over $10$ steps in total.
The numerical results, presented in \Cref{fig:cubic}, compare the 
computational time and error at different system times $t$, 
for the naive $\HMM_k$ with $k = 0, 1, 2$ against the efficient 
two-grid $\HMM_k^L$ with $L = 1$ and $k = 0, 1, 2$.
As shown in \Cref{fig:cubic.sub.1}, the computational costs are clearly 
stratified.
The cost of $\HMM_k^1$ is only slightly higher than that of $\HMM_k$.
In contrast, the cost of $\HMM_{k+1}$ is approximately double the cost of 
$\HMM_k$.
\Cref{fig:cubic.sub.2} shows the corresponding errors.
The error curves for $\HMM_0^1$ and $\HMM_1$ are nearly identical, 
as are the curves for $\HMM_1^1$ and $\HMM_2$.
The combination of these two observations provides definitive proof of the 
method's efficiency and accuracy.
The $\HMM_0^1$ scheme achieves the accuracy of the first-order $\HMM_1$ 
scheme at a cost comparable to the zeroth-order $\HMM_0$ scheme, 
representing a speedup of nearly a factor of two.
Similarly, $\HMM_1^1$ achieves the accuracy of $\HMM_2$ at the cost of 
$\HMM_1$.
This confirms that the MGT framework successfully delivers high-order 
accuracy for the cost of a low-order method.

\begin{figure}[htp]
    \centering
    \begin{subfigure}[b]{0.49\textwidth}
        \centering
        \includegraphics[width=\linewidth]{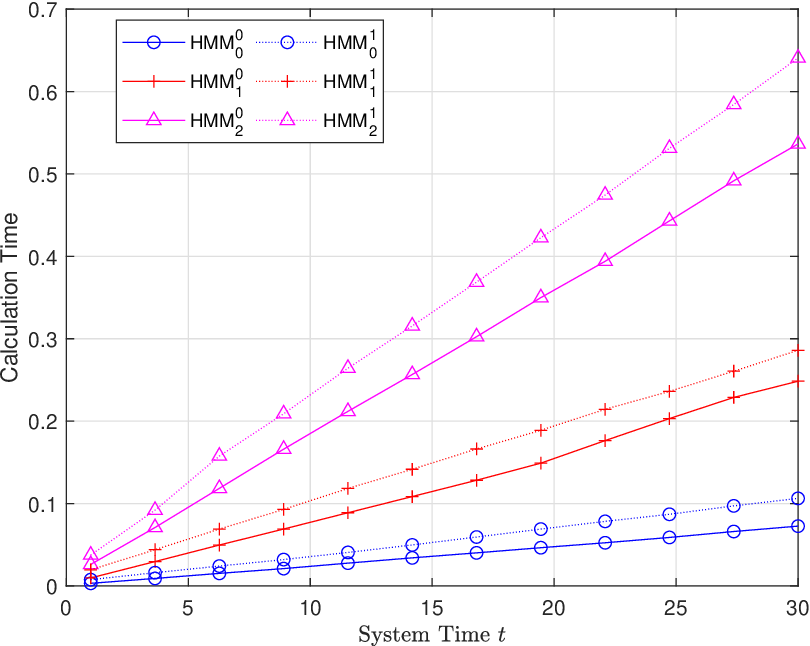}
        \caption{Calculation time versus system time.}
        \label{fig:cubic.sub.1}
    \end{subfigure}
    \hfill
    \begin{subfigure}[b]{0.49\textwidth}
        \centering
        \includegraphics[width=\linewidth]{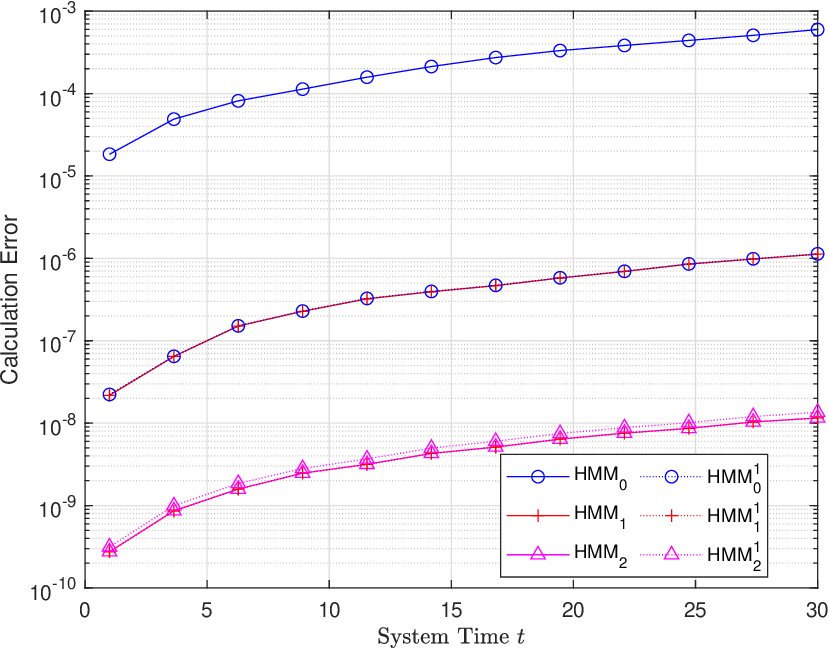}
        \caption{Calculation error versus system time.}
        \label{fig:cubic.sub.2}
    \end{subfigure}
    \caption{Comparison of computational cost and accuracy for the cubic 
    Chua model.}
    \label{fig:cubic}
\end{figure}

\subsection{Long-time modeling error}
This set of experiments is designed to validate the theoretical error 
scaling predictions from \Cref{sec:model} and \Cref{sec:analysis}.
To isolate the modeling error, a small time step size is used.

First, consider a simple linear model: 
\[
\left\{
\begin{aligned} 
& \od{x}{t} = - J y, \\
& \od{y}{t} = \frac{1}{\eps}(x - y), 
\end{aligned}
\right.
\]
where $x,y \in \bR^2$, $J = \left( \begin{smallmatrix}
0 & 1 \\ -1 & 0 \end{smallmatrix} \right)$, $x_0 = (1, 0)$, 
$y_0 = (0.9, 0.1)$.
We solve the system until $T_\eps = \eps^{-1}$.
The MGT parameters are $P = 4$ and $m = 3$. 
Other parameters are $\Delta t = 10^{-3}$, $\Delta t_c = 10^{-4}$, 
$\eta = 5 \times 10^{-6}$.
The micro-solver is a forward Euler ODE solver with time step size 
$\delta t = \eps$ over $10$ steps in total.
\Cref{fig:linear} shows the numerical error at $t = T_\eps$ for the 
two-grid $\HMM_k^1$ scheme with $k = 0, 1, 2$.
The target model is $\HMM_{k+1}$, whose modeling error is 
$\cO(\eps^{k+2} t)$.
At $t = T_\eps = \eps^{-1}$, the accumulated error is theoretically 
$\cO(\eps^{k+1})$.
The numerical results in \Cref{fig:linear}, showing the error versus 
$\eps$ on a log-log scale, perfectly match this theoretical expectation, 
with observed slopes corresponding to orders $\cO(\eps^1)$, $\cO(\eps^2)$, 
and $\cO(\eps^3)$ for $k = 0, 1, 2$, respectively.

Second, consider the Lorenz-96 model \cite{Lorenz96}: 
\[
\left\{
\begin{aligned} 
& \od{x_k}{t} = -x_{k-1} (x_{k-2} - x_{k+1}) - x_k + a - h b^{-1}
\sum_{j = 1}^{J} y_{j,k}, \\
& \od{y_{j,k}}{t} = \frac{1}{\eps}\left( - b y_{j+1,k} 
(y_{j+2,k} - y_{j-1,k}) - y_{j,k} + h b^{-1} x_k \right), 
\end{aligned}
\right.
\]
where $j = 1, 2, \ldots, J$, $k = 1, 2, \ldots, K$.
The subscripts $j$ and $k$ are periodic, modulo $J$ and $K$, respectively.
We take $J = 10$, $K=36$, $a = 1$, $b = 10$, and $h = K^{-1}$.
The initial conditions are set to $x_k(0) = z_k (z_k-1) (z_k+1)$ where 
$z_k = \tfrac{1+2k}{K} - 1$ for $k = 1,2, \ldots, K$, and $y_{j,k}(0) = 0$ 
for all $j,k$.
The system is solved until $T = 100$.
The MGT parameters are $P = 4$ and $m = 3$.
Other parameters are $\Delta t = 10^{-3}$, $\Delta t_c = 10^{-4}$, 
$\eta = 10^{-6}$.
The micro-solver is a forward Euler ODE solver with time step size 
$\delta t = 0.5 \eps$ over 20 steps in total.
\Cref{fig:lorenz} shows the numerical error at $T = 100$ for the naive 
$\HMM_0$ (denoted $L=0$) and the $\HMM_0^{L}$ schemes with $L = 1, 2$, all 
using $\HMM_0$ as the base model. 
The results confirm the theoretical modeling error of order 
$\cO(\eps^{L+1})$, with the observed slopes matching the expected orders.

\begin{figure}[t]
	\begin{minipage}[t]{.49\linewidth}
		\centering
    \includegraphics[width=\linewidth]{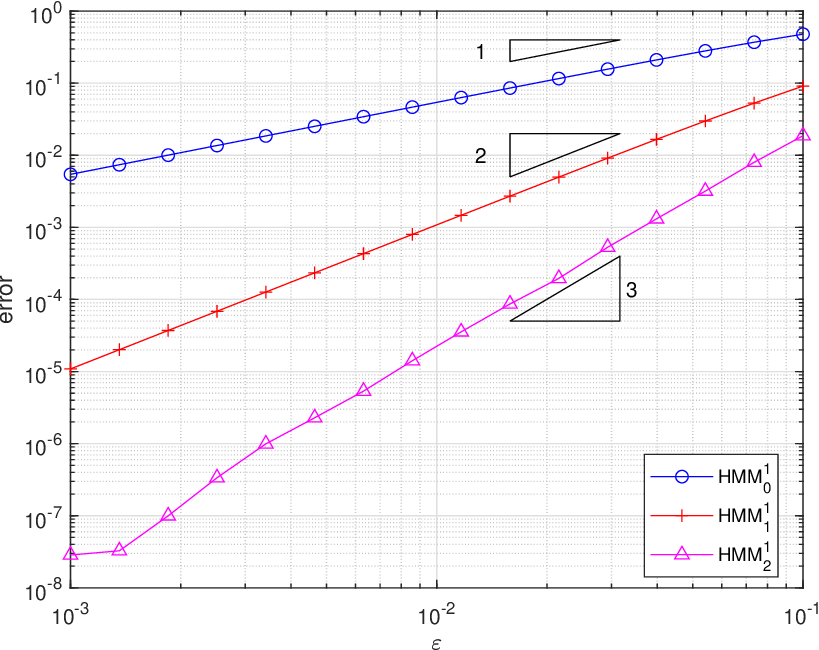}
    \caption{Numerical error at $T_\eps = \eps^{-1}$ versus $\eps$ for the 
    linear model using $\HMM_k^1$ schemes.}
    \label{fig:linear}
	\end{minipage}
  \hfill
	\begin{minipage}[t]{.49\linewidth}
		\centering
    \includegraphics[width=\linewidth]{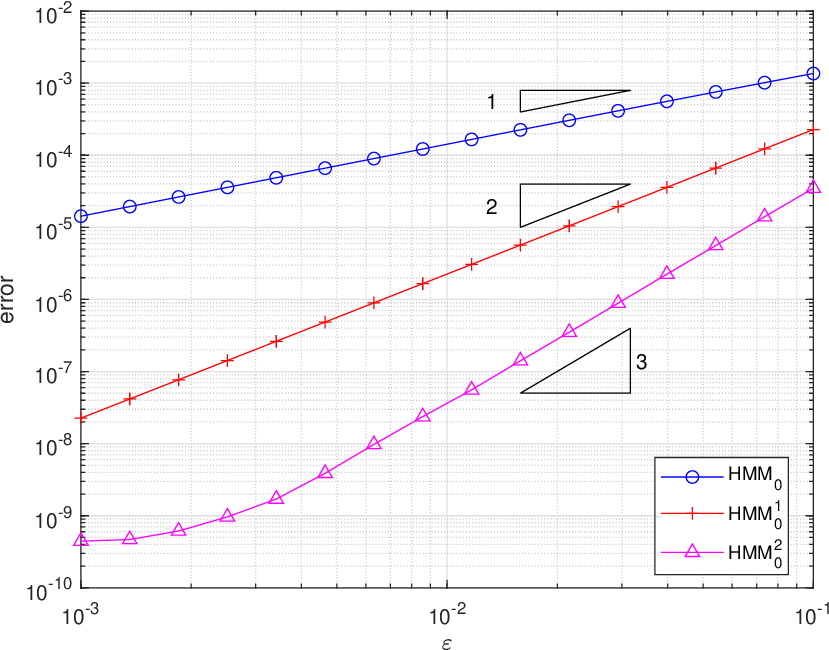}
    \caption{Numerical error at $T = 100$ versus $\eps$ for the 
    Lorenz-96 model using $\HMM_0^L$ schemes.}
    \label{fig:lorenz}
	\end{minipage}
\end{figure}

Next, consider the Robertson model of chemical reaction, a prominent 
example of a stiff ODE system \cite{robertson1966solution}:
\[
\left\{
\begin{aligned} 
& \od{x}{t} = -0.04 x + (1-x-\eps y)y, \\
& \od{y}{t} = \frac{1}{\eps}(0.04 x - (1-x-\eps y)y-0.3 y^2), 
\end{aligned}
\right.
\]
with $x_0 = 1$, $y_0 = 0$, $T = 500$.
The MGT parameters are $P = 4$ and $m = 3$. 
Other parameters are $\Delta t = 10^{-3}$, $\Delta t_c = 10^{-4}$, 
$\eta = 10^{-5}$.
The micro-solver is an exact solver for the quadratic equation in $y$.
Robertson model has a dissipation in $x$, leading to a modeling error of 
order $\cO(\eps^{k+L+1})$ according to \Cref{thm:longtime_lambda}.
\Cref{fig:robertson} shows the numerical error at $t = T$ for the 
two-grid $\HMM_k^1$ with $k = 0, 1$. 
The numerical errors for $\HMM_0^1$ and $\HMM_1^1$ perfectly match the 
theoretical modeling error scaling of $\cO(\eps^2)$ and $\cO(\eps^3)$, 
respectively.

Finally, consider the enzyme reaction equation 
\cite{MR635782,Heineken1967On}:
\[
\left\{
\begin{aligned} 
& \od{x}{t} = -x + (x+c)y, \\
& \od{y}{t} = \frac{1}{\eps}(x - (x+1) y), 
\end{aligned}
\right.
\]
with $x_0 = 1$, $y_0 = 0$, $c = 0.5$, $T = 20$.
The MGT parameters are $P = 4$ and $m = 3$. 
Other parameters are $\Delta t = 10^{-3}$, $\Delta t_c = 10^{-4}$, 
$\eta = 5 \times 10^{-6}$.
The micro-solver is an exact solver for the linear equation in $y$.
The enzyme reaction model has a strong dissipation in $x$.
\Cref{fig:enzyme} shows the numerical error at $t = T$ for the 
$\HMM_0^1$, $\HMM_1^1$, and $\HMM_0^2$ schemes.
This experiment validates the full error scaling $\cO(\eps^{k+L+1})$.
The error for $\HMM_0^1$ ($k = 0, L = 1$) is $\cO(\eps^2)$, while the error 
for both $\HMM_1^1$ ($k = 1, L = 1$) and $\HMM_0^2$ ($k = 0, L = 2$) is 
$\cO(\eps^3)$, which completely satisfies the theoretical estimate.
In addition, the results compare the two extrapolation schemes mentioned 
in \Cref{rmk:differ_extrap}, i.e., correcting the manifold $\Gamma$ or the 
vector field $F$.
The numerical results for both correction strategies are almost identical, 
as claimed in \Cref{rmk:differ_extrap}, demonstrating the robustness of 
the MGT concept.

\begin{figure}[t]
	\begin{minipage}[t]{.49\linewidth}
		\centering
		\includegraphics[width=\linewidth]{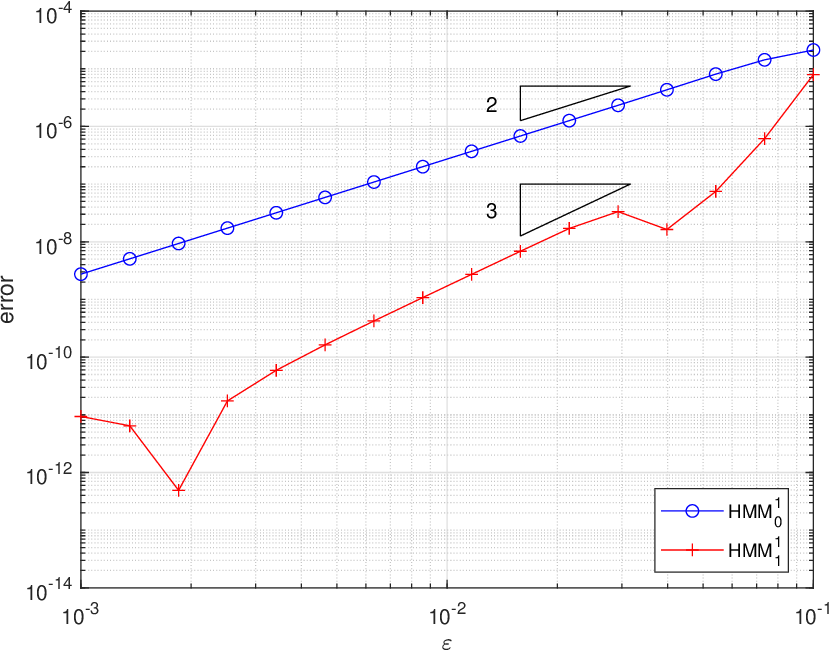}
    \caption{Numerical error at $T = 500$ versus $\eps$ for the Robertson model using $\HMM_k^1$ schemes.}
    \label{fig:robertson}
	\end{minipage}
  \hfill
	\begin{minipage}[t]{.49\linewidth}
		\centering
		\includegraphics[width=\linewidth]{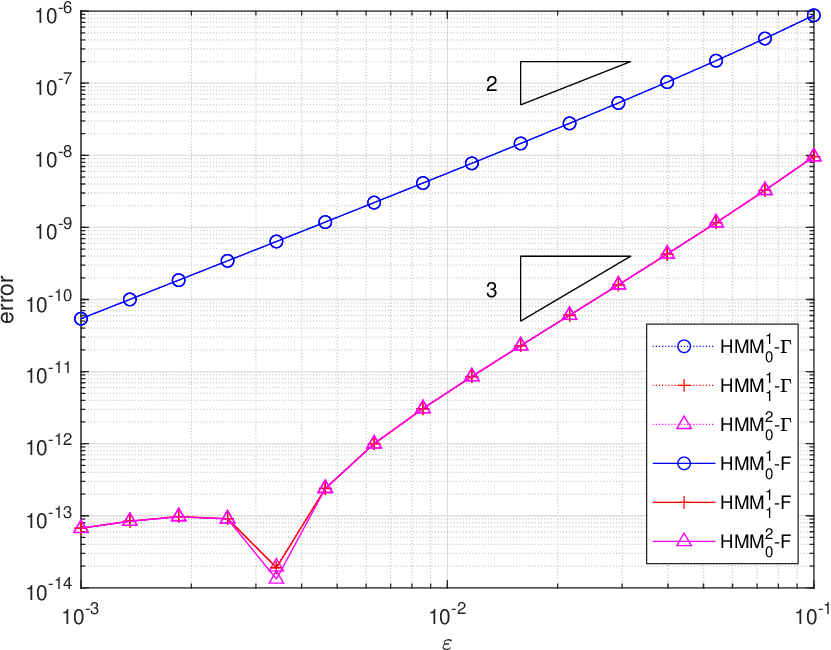}
    \caption{Numerical error at $T = 20$ versus $\eps$ for the enzyme 
    reaction model using various $\HMM_k^L$ schemes and correction 
    strategies.}
    \label{fig:enzyme}
	\end{minipage}
\end{figure}

\section{Conclusions}
\label{sec:concl}

In this work, we have addressed two fundamental obstacles to the efficient 
and accurate long-time simulation of multiscale ODEs: the lack of a 
rigorous long-time error theory for high-order methods and their redundant 
computational cost.

Our first major contribution was to close the analysis gap. 
We proved that for a large class of rapidly dissipative systems, the 
modeling error of the $k$-th order numerical homogenization method 
accumulates linearly in time, yielding a global error bound of 
$\cO(t \eps^{k+1})$. 
This result replaces the previously known exponential bound and provides a 
solid theoretical foundation for the use of high-order HMM in long-time 
simulations, confirming that higher-order models are indeed more accurate 
and reliable in the long-time regime.

Our second contribution was to bridge the efficiency gap. 
We introduced a novel multigrid-in-time HMM algorithm that systematically 
exploits the smoothness of high-order corrections to the effective 
dynamics. 
By computing these expensive corrections only on coarse time grids and 
extrapolating their influence to the fine grid, the MGT-HMM achieves the 
accuracy of a high-order model (e.g., order $k+L$) at a computational cost 
comparable to that of a low-order base model (order $k$). 
Our complexity analysis and numerical experiments confirm that this 
approach effectively decouples high accuracy from high computational cost,
reducing the expense by a factor of approximately $2^L$ compared to a 
naive implementation.

The combination of a rigorous long-time error theory and an efficient MGT 
algorithm makes high-order, long-time simulations of multiscale ODEs 
computationally feasible. 
This synergy, where the theory justifies the need for the algorithm and 
the algorithm makes the theoretically superior method practical, opens the 
door for more accurate and reliable modeling in the numerous scientific 
domains where such challenging multiscale problems arise.

\end{document}